\newtheorem{thm}{Theorem}[section]
\newtheorem{cor}[thm]{Corollary}
\newtheorem{lem}[thm]{Lemma}
\newtheorem{prop}[thm]{Proposition}
\theoremstyle{definition}
\newtheorem{defn}[thm]{Definition}
\newtheorem{rem}[thm]{Remark}
\numberwithin{equation}{section}
\newcommand{\R}{\mathbb{R}}
\renewcommand{\P}{\mathbb{P}}
\newcommand{\E}{\mathbb{E}}
\newcommand{\N}{\mathbb{N}}
\newcommand{\Z}{\mathbb{Z}}
\newcommand{\Tu}{T_u}
\newcommand{\diag}{\mathrm{Diag}}
\newcommand{\vect}[2]{\left( \begin{array}{c} #1 \\ #2 \end{array} \right)}
\newcommand{\vectthree}[2]{\left( \begin{array}{c} #1 \\ \vdots \\ #2 \end{array} \right)}
\newcommand{\ob}[1]{{#1}^\ast}
\newcommand{\bb}{\boldsymbol}
\renewcommand{\mid}{\, \Big| \,}
\newcommand{\anorm}[1]{\left\| #1 \right\|_{\bb \alpha}}
\newcommand{\supp}{\mathrm{supp}}
\newcommand{\ie}{{\em i.e.}}
\newcommand{\eg}{{\em e.g.}}
\newcounter{stepnumber}[thm]
\newcommand{\step}{\stepcounter{stepnumber} \textbf{Step \arabic{stepnumber}.~}}
\newcommand{\eqdist}{\stackrel{\mathrm{law}}{=}}
\begin{document}

\title{Asymptotic Independence  {\em ex machina} - Extreme Value Theory for the Diagonal SRE Model}

\author{Sebastian Mentemeier\thanks{Universität Hildesheim, Germany. mentemeier@uni-hildesheim.de}, Olivier Wintenberger\thanks{Sorbonne Universit\'e Paris, France. olivier.wintenberger@upmc.fr}}

\maketitle

\begin{abstract}
We consider multivariate stationary processes $(\boldsymbol{X}_t)$ satisfying a stochastic recurrence equation of the form
$$ \boldsymbol{X}_t= \mathbb{ M}_t \boldsymbol{X}_{t-1} + \boldsymbol{Q}_t,$$
where  $(\boldsymbol{Q}_t)$ are iid random vectors and 
$$
\mathbb{M}_t=\mathrm{Diag}(b_1+c_1 M_t, \dots, b_d+c_d M_t)
$$ 
are iid diagonal matrices and $(M_t)$ are iid random variables. 
We obtain a full characterization of the multivariate regular variation properties of $(\boldsymbol{X}_t)$, proving that coordinates $X_{t,i}$ and $X_{t,j}$ are asymptotically independent even though all coordinates rely on the same random input $(M_t)$. We describe extremal properties of $(\boldsymbol{X}_t)$ in the framework of vector scaling regular variation.
Our results are applied to some multivariate autoregressive conditional heteroskedasticity (BEKK-ARCH and CCC-GARCH) processes.
\end{abstract}

{\em AMS 2010 subject classifications:} 60G70, 60G10 

\medskip

{\em Keywords:} Stochastic recurrence equations, multivariate ARCH, \\ multivariate regular variation, non-standard regular variation

\section{Introduction}

We consider multivariate stationary processes $(\boldsymbol{X}_t)$, satisfying a diagonal Stochastic Recurrence Equation (SRE) of the form
\begin{equation}\label{eq:SRE}
\bb X_t=\mathbb{M}_t \bb X_{t-1}+\bb Q_t,\qquad t\in\Z,
\end{equation}
where $(\mathbb{M}_t)$ is an iid sequence of matrices such that for non-negative coefficients $b_i,$ $c_i$, $1\le i\le d$, 
\begin{equation}\label{eq:diagmt}
\mathbb{M}_t=\mathrm{Diag}(b_1+c_1 M_t, \dots, b_d+c_d M_t)\,,\qquad t\in \Z\,,
\end{equation}
and $(\bb Q_t)$ is an iid sequence of $\R^d$ random vectors with marginals $Q_{t,i}$, $1\le i\le d$, independent of iid random variables $(M_t)$.  Stationary solutions of SRE have attracted a lot of research in the past few years, see  \cite{Buraczewski.etal:2016} and references therein. However, in the present setting of diagonal matrices, only marginal tail behavior has been investigated so far using the result of the seminal paper of \cite{Goldie1991}.  Moreover,  serial extremal dependence of the marginal sequences $(X_{t,i})_{t\in\Z}$ for any $1\le i\le d$ is well known since the pioneer work of \cite{deHaan1989extremal}. Under Assumptions \eqref{eq:A1} -- \eqref{eq:A6} that are introduced in Section \ref{sec:univariate}, applying the he Kesten-Goldie-Theorem of \cite{Goldie1991,Kesten1973} we know that 
\begin{equation}\label{eq:pareto}
\P(X_{0,i} > x) \sim a_i x^{-\alpha_i},\qquad x\to \infty,
\end{equation}
where $a_i$ is a positive constant and $\alpha_i>0$ is the unique solution of the equation $\E[|b_i+c_iM_0|^{\alpha_i}]=1$. Here and below, $f(x) \sim g(x)$ means that $\lim_{x \to \infty} \frac{f(x)}{g(x)}=1$. We will show that $\alpha_i\neq \alpha_j$ in many situations, for instance when $c_i/c_j> b_i/b_j\ge 1$ and  $c_j>0$  (with the convention $0/0=1$).


\medskip

Diagonal SRE \eqref{eq:SRE} is a very simple  model that may coincide with some classical multivariate GARCH ones. For the specific case $(M_t)$ are iid $\mathcal{N}(0,1)$, $b_i=0$ for all $1\le i\le d$ and $(\bb Q_t)$ are iid  $\mathcal{N}(0,\bb{\Sigma})$ the diagonal SRE coincides with the diagonal BEKK-ARCH(1) model as in \cite{pedersen:wintenberger:2018}.
For $(\bb Q_t)$ degenerated to a constant  the diagonal SRE model coincides with the volatility process of some CCC-GARCH model, see Section \ref{sec:generalmodel} for details. Such diagonal SRE models are very interesting as they generate potentially different  marginal tail indices $\alpha_i>0$. This freedom is not offered by 
the general CCC-GARCH model the marginals of which have the same tail index, as discussed in \cite{Buraczewski.etal:2016} and \cite{starica:1999}. This feature is important for modeling: Heavy tailed data, such as in finance, may exhibit different tail indices indicating different magnitude in the responses during financial crisis.   
 
 \medskip
We provide in this paper the \textbf{joint extremal behaviour}, {\em i.e.}, multivariate regular variation of $(\bb X_t)$ and the interplay between marginals that have distinct tail indices. As an example, consider the case of the bivariate case  $(X_{0,1}, X_{0,2})$ such that $c_2/c_1> b_2/b_1\ge 1$ and $c_1>0$ and positive $M$ and $Q$. Then $\alpha_1>\alpha_2$ and our first main result in \textbf{Section \ref{sec:asind}} states
%
that $X_{0,1}$ and $X_{0,2}$ are asymptotically {\em independent} in the sense that
\begin{equation*}
\lim_{x\to \infty}\P(X_{0,1}>x^{\frac{1}{\alpha_1}} \, | \, X_{0,2}>x^{\frac{1}{\alpha_2}})=\lim_{x\to \infty}\P( X_{0,2}>x^{\frac{1}{\alpha_2}}\, | \, X_{0,1}>x^{\frac{1}{\alpha_1}})=0.
\end{equation*}
This result remains true also when $Q_1=Q_2$. Thus, even though $X_{0,1}$ and $X_{0,2}$ are perfectly dependent in the sense that all their randomness comes from the same random variables, extremes never occur simultaneously in these marginals. 
This result together with the marginal regular variation property \eqref{eq:pareto} allow us to derive that the random vector $(X_{0,1}, X_{0,2})$ is  {\em Vector Scaling Regular Variation (VSRV)}  in the sense of \cite{pedersen:wintenberger:2018}, see \textbf{Section \ref{sec:VSRV}}. This notion is a simple alternative of the non-standard regular variation of \cite{resnick:2007} suited for Pareto equivalent marginal tails satisfying \eqref{eq:pareto}. The extremal dependence among the marginals is described through the spectral vector $\widetilde{\bb \Theta}_0$ and we will show the following (see Theorem \ref{thm:general case} for the full result):

\begin{thm}\label{th:intro}
Suppose \eqref{eq:A1} -- \eqref{eq:A6} with positive $Q_1, Q_2$ and $M$, and that  $c_2/c_1> b_2/b_1\ge 1$ with $c_1>0$. Then  $\bb X_0$ is  VSRV and its components are asymptotically independent, {\em i.e.} its spectral component $\widetilde{\bb \Theta}_0$ satisfies $$\widetilde{\bb \Theta}_0\in\{(1,0),(0,1)\} \quad \text{a.s.}$$
\end{thm}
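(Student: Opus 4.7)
The plan is to prove the asymptotic independence bound $\P(X_{0,1}>x^{1/\alpha_1},\,X_{0,2}>x^{1/\alpha_2})=o(x^{-1})$ first, and then to read off VSRV and the description of the spectral vector $\widetilde{\bb\Theta}_0$ as a direct consequence. Iterating \eqref{eq:SRE} backwards yields the series representation
$$X_{0,i} \;=\; \sum_{k\ge 0}\Pi_{k,i}\,Q_{-k,i}, \qquad \Pi_{k,i}=\prod_{j=0}^{k-1}(b_i+c_iM_{-j}),\ \Pi_{0,i}=1.$$
Under the standing assumptions the Kesten--Goldie theorem supplies the marginal tails \eqref{eq:pareto}, and convexity of $s\mapsto\E[(b_i+c_iM)^s]$ combined with $c_2/c_1>b_2/b_1\ge 1$ forces $\alpha_1>\alpha_2$, as already noted in the introduction.

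The core of the proof is a moment inequality for
$$\phi(s_1,s_2):=\E\bigl[(b_1+c_1M)^{s_1}(b_2+c_2M)^{s_2}\bigr].$$
On the segment $\{s_1/\alpha_1+s_2/\alpha_2=1\}$, parametrized by $(s_1,s_2)=(\alpha_1\theta,\alpha_2(1-\theta))$ with $\theta\in(0,1)$, Hölder's inequality with conjugate exponents $1/\theta,1/(1-\theta)$ gives
$$\phi(\alpha_1\theta,\alpha_2(1-\theta)) \;\le\; \E[(b_1+c_1M)^{\alpha_1}]^\theta\,\E[(b_2+c_2M)^{\alpha_2}]^{1-\theta} \;=\; 1.$$
Equality would force $(b_1+c_1M)^{\alpha_1}$ and $(b_2+c_2M)^{\alpha_2}$ to be proportional almost surely, hence the ratio $(b_2+c_2M)/(b_1+c_1M)$ to be a.s.\ constant on the support of $M$; since under $c_2/c_1>b_2/b_1$ this ratio is a strictly monotone function of $m$, non-degeneracy of $M$ excludes equality. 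Thus $\phi(\alpha_1\theta,\alpha_2(1-\theta))<1$ for every $\theta\in(0,1)$, and continuity of $\phi$ on a neighborhood of this open segment (where moments are finite, cf.\ \eqref{eq:A6}) produces $s_1^*<\alpha_1$, $s_2^*<\alpha_2$ with $s_1^*/\alpha_1+s_2^*/\alpha_2>1$ and $\phi(s_1^*,s_2^*)<1$.

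Plugging $(s_1^*,s_2^*)$ into the series representation and using independence of $(M_{-j})$ from $(\bb Q_{-k})$, one bounds $\E[X_{0,1}^{s_1^*}X_{0,2}^{s_2^*}]$ by a double sum of $\E[\Pi_{k,1}^{s_1^*}\Pi_{l,2}^{s_2^*}]$ against uniformly bounded $\bb Q$-moments. Factoring $\Pi_{l,2}$ (for $l\ge k$) into its first $k$ factors sharing the $M_{-j}$'s with $\Pi_{k,1}$ and its remaining $l-k$ independent factors, and symmetrically for $k>l$, the inner expectation becomes $\phi(s_1^*,s_2^*)^{\min(k,l)}\phi_i(s_i^*)^{|k-l|}$ with $\phi_i(s):=\E[(b_i+c_iM)^s]<1$ for $s<\alpha_i$. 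The resulting geometric double sum converges, so $\E[X_{0,1}^{s_1^*}X_{0,2}^{s_2^*}]<\infty$. Markov's inequality now delivers
$$\P\bigl(X_{0,1}>x^{1/\alpha_1},\,X_{0,2}>x^{1/\alpha_2}\bigr)\;\le\;\frac{\E[X_{0,1}^{s_1^*}X_{0,2}^{s_2^*}]}{x^{s_1^*/\alpha_1+s_2^*/\alpha_2}}\;=\;o(x^{-1}),$$
which combined with \eqref{eq:pareto} is exactly the asymptotic independence statement. VSRV of $\bb X_0$ and the conclusion $\widetilde{\bb\Theta}_0\in\{(1,0),(0,1)\}$ a.s.\ then follow from the general principle that marginal regular variation (even with distinct indices) plus asymptotic independence forces the VSRV spectral vector to sit on the coordinate axes, since any interior mass would contradict the vanishing joint tail.

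The main obstacle is Step 2: turning Hölder's inequality, which yields only $\phi\le 1$ along the line, into a strict bound above it. This requires both the non-proportionality analysis (to get strict inequality on the interior of the segment) and the continuity/perturbation argument to move to a pair $(s_1^*,s_2^*)$ above the line while keeping moments finite. A secondary technicality is that when $\alpha_i>1$ forces $s_i^*>1$, the subadditive bound $(\sum a_k)^{s_i^*}\le\sum a_k^{s_i^*}$ is no longer available and must be replaced by Minkowski's inequality or by iterating the SRE a bounded number of steps so that an effective constant $C^n\phi(s_1^*,s_2^*)^n$ stays below one.
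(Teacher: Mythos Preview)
Your argument is correct and substantially more elementary than the paper's. The paper proves asymptotic independence (its Theorem~\ref{thm:asymptotic independence}) by a probabilistic route: it introduces the first passage time $T_u=\inf\{n:X_{n,1}>u^{1/\alpha_1}\}$, passes to the tilted measure $\P^{\alpha_1}(M\in\cdot)=\E[(b_1+c_1M)^{\alpha_1}\mathbf 1(M\in\cdot)]$, and invokes the large deviation estimates of \cite{Buraczewski.etal:2016a} for $T_u$ together with the Jensen inequality $\alpha_2\mu_{2|1}<\alpha_1\mu_{1|1}$ to show that, conditionally on $T_u<\infty$, the second coordinate cannot reach level $u^{1/\alpha_2}$. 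Your route replaces all of this by a single moment computation: strict H\"older on the segment $\{s_1/\alpha_1+s_2/\alpha_2=1\}$ gives a pair $(s_1^\ast,s_2^\ast)$ strictly above the line with $\phi(s_1^\ast,s_2^\ast)<1$, the product structure of $\Pi_{k,1}^{s_1^\ast}\Pi_{l,2}^{s_2^\ast}$ turns the joint moment $\E[X_{0,1}^{s_1^\ast}X_{0,2}^{s_2^\ast}]$ into a convergent geometric double sum, and Markov finishes. Besides being shorter, your argument never uses \eqref{eq:A6}; the paper needs it only to control $\max_{k\le L_u}Q_{k,2}/Q_{k,1}$ inside its exit-time decomposition. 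The deduction of VSRV and of $\widetilde{\bb\Theta}_0\in\{(1,0),(0,1)\}$ from marginal Pareto tails plus the joint $o(x^{-1})$ bound is the inclusion--exclusion computation carried out in the paper's Theorem~\ref{thm:general case}.

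Two small repairs. First, the equality analysis is misstated: H\"older equality forces the ratio $(b_2+c_2M)^{\alpha_2}/(b_1+c_1M)^{\alpha_1}$ to be a.s.\ constant, not the unpowered ratio; in \eqref{case1} the latter is identically $c_2/c_1$, yet strictness still holds because $M^{\alpha_2-\alpha_1}$ is non-degenerate. (The paper makes the same claim in Lemma~4.2 with an equally brief justification.) Second, the citation for finiteness of $\phi$ near the segment should be \eqref{eq:A3}, not \eqref{eq:A6}. Your two fixes for $s_i^\ast>1$ both work; the iteration one is clean: write $X_{0,i}=\Pi_{n,i}X_{-n,i}+R_{n,i}$ with $R_{n,i}$ a finite sum, apply $(a+b)^s\le 2^{(s-1)_+}(a^s+b^s)$ once to each factor, and choose $n$ so large that $2^{(s_1^\ast-1)_++(s_2^\ast-1)_+}\phi(s_1^\ast,s_2^\ast)^n<1$; the cross terms are finite because $\E[X_{0,i}^{s_i^\ast}]<\infty$ for $s_i^\ast<\alpha_i$.
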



\textbf{Section \ref{sec:diageq}} concerns the case where the diagonal terms $m_i$ are identically equal to $m$ and hence the tail indices of  the marginals $X_{0,i}$ are the same. Applying Theorem 1.6 of \cite{buraczewskietal} on the SRE equation \eqref{eq:SRE}  with multiplicative similarity matrix $(b+cM_0) \bb I_d$ \cite{pedersen:wintenberger:2018} derived multivariate regular variation of the process $(X_t)$. We refine this result by characterizing the angular properties of the tail measure.

\medskip

To study the general diagonal SRE where some diagonal elements are identical and others are distinct, we use the notion of VSRV that extend easily from the marginal distribution to stationary Markov chains, see \textbf{Section \ref{sec:VSRV}}.
The joint extremal behaviour is described via a spectral tail process $(\widetilde{\bb \Theta}_t)_{t \ge 0}$, satisfying the recursive equation
\[
\widetilde{\bb \Theta}_t= \mathbb{M}_t\widetilde{\bb \Theta}_{t-1},\qquad t\ge 1
\]
from some initial value $\widetilde{\bb \Theta}_0$.  In \textbf{Section \ref{sec:generalmodel}}, we derive the characterization of $\widetilde{\bb \Theta}_0$, proving asymptotic independence between blocks with different tail indices, and asymptotic dependence within blocks. Since the block structure is stable by multiplication by the diagonal matrix $ \mathbb{M}_t$, we show that the clusters of extremes of different magnitude and tail indices are asynchronous. Thus diagonal multivariate GARCH processes model different magnitude responses  to asynchronous financial crisis.  

We apply this result to characterize the spectral tail measure of the BEKK-ARCH(1) model and the CCC-GARCH model mentioned above. A simulation study illustrates our results.

 
%

\section{Preliminaries}

\subsection{Notation} $\|\cdot\|$ will denote the max-norm  on $\R^d$ and $\|\cdot\|_2$ the euclidean norm.  For vectors, we use bold notation  $\bb x =(x_1, \dots, x_d)$. Operations between vectors or scalar and vector are interpreted coordinate wise, \eg,  $x^{-1/\bb \alpha} = (x^{-1/\alpha_1}, \dots, x^{-1/\alpha_d})$ for positive $x$ and $\bb a \bb b = (a_i b_i)_{1 \le i \le d}$. A notation  that will be used frequently is vector scaling of a sequence of $\R^d$-valued random variables, e.g.
\begin{align*}   x^{-1/\bb \alpha} (\bb X_0, \dots, \bb X_t) ~&=~ \big( x^{-1/\bb \alpha } \bb X_0, \dots,  x^{-1/\bb \alpha} \bb X_t\big) \nonumber \\
 ~&=~ \big( \big( x^{-1/\alpha_i} X_{0,i}\big)_{1 \le i \le d}, \dots, \big( x^{-1/\alpha_i} X_{t,i}\big)_{1 \le i \le d} \big) 
  .\end{align*}
For some potentially distinct $\alpha_1,\ldots,\alpha_d$ we define the following notion of a radial distance: 
\begin{equation*}
\|\bb x\|_{\bb \alpha}= \max_{1\le i\le d} |x_i|^{\alpha_i}  = \| \bb x^{\bb \alpha} \| ,\qquad \bb x= (x_i)_{1\le i\le d}\in \R^d.
\end{equation*}
Here $\bb x^{\bb \alpha}$ denotes the vector $({\rm sign}(x_i)|x_i|^{\alpha_i})_{1\le i\le d}$ in $\R^d$.
We want to stress that $\|\bb x\|_{\bb \alpha}$ is neither homogeneous nor does it satisfy the triangle inequality  for general values of $\alpha_1, \dots, \alpha_d$. Thus, it is not a (pseudo-)norm but it will provide a meaningful scaling function. Note that $\bb x \mapsto \| \bb x\|_{\bb \alpha}$ is a continuous function and is $ 1/\bb\alpha$-homogeneous in the following sense:
\begin{equation*}
\| \lambda^{1/\bb \alpha} \bb X_0\|_{\bb \alpha} ~=~ \max_{1 \le i \le d} \left|{\lambda^{1/\alpha_i}} {X_{0,i}} \right|^{\alpha_i} ~=~ \lambda \| \bb X_0\|_{\bb \alpha}
\end{equation*} The components of the vector 
\begin{align*}
\|\bb X_0\|_{\bb \alpha}^{-1/\bb \alpha} \bb X_t ~=~ \big( \|\bb X_0\|_{\bb \alpha}^{-1/\alpha_i} X_{t,i} \big)_{1 \le i \le d} 
\end{align*}
have $ \| \cdot\|_{\bb \alpha}$ and max-norm equal to one when $t=0$ thus belongs to ${\cal S}_\infty^{d-1}=\{\bb x\in \R^d;\,\|\bb x\|_{\bb \alpha}=1\}$ the max-norm-unit sphere.


\subsection{The univariate marginal SRE and the assumptions}\label{sec:univariate}
Due to the diagonal multiplicative term in \eqref{eq:SRE}, the marginals of $\bb X_t=(X_{t,1},\ldots,X_{t,d})^\top$ are satisfying the univariate marginal SREs $X_{t,i}=(b_i+c_i M_t) X_{t-1,i}+Q_{t,i}$, $t\in \Z$ for $1\le i\le d$.
We work under the following set of assumptions that implies the ones of \cite{Goldie1991} on  the marginal SREs. 
Denoting by $(M, \bb Q)$ a generic copy of $(M_t, \bb Q_t)$, we assume that for all $1 \le i \le d$,
\begin{equation}\label{eq:A1} \tag{A1}
\E\big[ \log \big|b_i+c_i M|\big] <0 .   \end{equation} 
This guarantees that the Markov chain $(\bb X_t)_{t \in \N}$ has a unique stationary distribution. It is given by the law of the random variable
\begin{equation} \label{eq:defV} \bb X ~=~ \vectthree{X_1}{X_d} ~:=~ \sum_{k=1}^\infty \mathbb{M}_1 \cdots \mathbb{M}_{k-1} \bb Q_k. \end{equation}

%

We further assume that there exist positive constants $\alpha_1, \dots, \alpha_d$ such that for $1 \le i \le d$
\begin{equation} 
\label{eq:A2} \tag{A2}
\E \big[|b_i+c_i M|^{\alpha_i}\big] ~=~1.
\end{equation}
Given these $\alpha_1, \dots, \alpha_d$, we assume for $1 \le i \le d$
\begin{equation} \label{eq:A3}\tag{A3}
 \E \big[|M|^{\alpha_i+\epsilon} \big] < \infty, \quad \E \big[ \|\bb Q\|^{\alpha_i+\epsilon} \big] < \infty 
\ \text{ for some $\epsilon >0$} .
\end{equation}
Of course, it suffices to check this condition for the maximal $\alpha_i$. We also need the technical assumption that
\begin{equation} \label{eq:A4} \tag{A4}
\text{the distributions of  $\log |b_i+c_i M|$ are non-arithmetic for all $1\le i\le d$.}
\end{equation}
Finally, to avoid degeneracy, we require for $1 \le i \le d$ that
\begin{equation} \label{eq:A5} \tag{A5} \P((b_i+c_iM)x+ Q_i= x)<1 \quad \text{ for all } x \in \R,  \text{ and } \P(Q_i>0)>0\end{equation}
For all pairs $1 \le i,j \le d$ such that $\alpha_i > \alpha_j$, we will require that
\begin{equation} \label{eq:A6} \tag{A6}\lim_{u\to \infty} \log(u) \, \P\Big( \dfrac{|Q_j|}{|Q_i|}>u^\varepsilon \Big)=0\, \quad \text{ for all $\epsilon >0$}. \end{equation}
This last assumption is specific for our results. We note in addition that the stationarity condition  $\eqref{eq:A1}$ can be deduced from \eqref{eq:A2} as soon as $M$ is not constant a.s. (which is implied by \eqref{eq:A4}), see the comments after Theorem 2.4.4 in \cite{Buraczewski.etal:2016}.

Given \eqref{eq:A1}--\eqref{eq:A5},  an application of the Kesten-Goldie-Theorem of \cite{Goldie1991,Kesten1973} yields the existence of a Pareto tail equivalent stationary distribution, i.e. the equivalence in \eqref{eq:pareto} is met. The positivity of $a_i$ follows by non trivial classical arguments. If $P(b_i+c_iM<0)>0$, then positivity of $a_i$ is proved in \cite[Theorem 4.1]{Goldie1991}. If $b_i+c_iM >0$ a.s., then additional arguments are needed: Assumptions \eqref{eq:A1} and \eqref{eq:A2} together imply that there are $m,m'$ in the support of $M$, such that $b_i+c_im<1$, $b_i+c_im'>1$. By \eqref{eq:A5}, there is $q_i>0$ in the support of $Q_i$. Since $Q_i$ and $M$ are assumed to be independent, we have that $(b_i+c_im,q_i)$ and $(b_i+c_im',q_i)$ are in the support of $(b_i+c_iM,Q_i)$. Then \cite [Proposition 2.5.4]{Buraczewski.etal:2016} yields that the support of $X_i$ is unbounded at $+\infty$, which together with \cite[Theorem 2.4.6]{Buraczewski.etal:2016} implies that $a_i$ is positive.

\section{Vector Scaling Regular Variation Markov chains}\label{sec:VSRV}

\subsection{Regular variation and the tail process}

Let $(\bb X_t)\in \R^d$ be a stationary time series. Its regular variation properties are defined in different ways. The most usual way is to define the tail process as in \cite{basrak:segers:2009}.  
\begin{defn}
The stationary time series $(\bb X_t)$ is regularly varying if and only if $\|\bb X_0\|$ is regularly varying and for all $t \ge 0$ there exist  weak limits
\[
\lim_{x\to \infty }\P\Big(\|\bb X_0\|^{-1}(\bb X_0,\ldots,\bb X_t)\in \cdot  \mid \|\bb X_0\|>x\Big) = \P\big((\bb \Theta_0,\ldots,\bb \Theta_t)\in \cdot\big)\,.
\]
\end{defn}
By stationarity and using Kolmogorov consistency theorem one can extend the trajectories $(\bb \Theta_0,\ldots,\bb \Theta_t)$ into  a process $(\bb \Theta_t)$ called the {\em spectral tail process}. To be  stationary regularly varying time series does not depend on the choice of the norm. We work with the max-norm in the following for convenience.

\subsection{Non-standard Regular Variation}\label{sec:nsrv}

If there exists $1 \le i \le d$ such that 
\[
\P(|X_{0,i}|> x) = o\big(\P(  \|\bb X_0\|>x)\big)\, ,\qquad x\to \infty\,,
\]
then the marginals of $\bb X_0$ are {\em not tail equivalent}. In this case, the notion introduced above is not suitable, since then the corresponding coordinate of the spectral tail process is degenerated, \ie, $\Theta_{0,i}=0$ a.s. Hence, information about extremes in this coordinate is lost. 

To circumvent this issue, the notion of {\em non-standard regular variation} was introduced (see \cite{resnick:2007} and reference therein). It is based on a standardization of the coordinates which holds as follows.   Assume that marginals are positive and (one-dimensional) regularly varying with possibly different tail indices $\alpha_i$ and cdf $F_i$, $ 1 \le i \le d$. Then non-standard regular variation holds if and only if 
$$ \lim_{x \to \infty} x \cdot \P \big( x^{-1 }\widetilde{ \bb X_0} \in \cdot \big)$$
exists in the vague sense, where the standardized vector $\widetilde{ \bb X_0}$ is defined as
\begin{equation*}
\widetilde{ \bb X_0} ~=~ (1/(1-F_i(X_{0,i})))_{1\le i\le d}\,.
\end{equation*}
Following \cite[Theorem 4]{dehaan:resnick:1977}, we note that $\widetilde{ \bb X_0}$ is regularly varying in the classical sense, i.e. $\|\widetilde{ \bb X_0}\|$ is regularly varying with tail index $1$ and there exists an angular measure which is the weak limit of
\[
\lim_{x\to \infty}\P\Big(\|\widetilde{ \bb X_0}\|^{-1}\widetilde{ \bb X_0}\in\cdot\mid \|\widetilde{ \bb X_0}\|>x\Big)\,.
\]
Note that the standardization is made so that all  coordinates $\widetilde{ \bb X_0}$ of are tail equivalent
\[
\P\big( \widetilde{ X_{0,i}}>x\big)\sim x^{-1}\,, \qquad x\to \infty\,,\qquad 1\le i\le d\,. 
\]

\subsection{Vector Scaling Regular Variation}

When dealing with time series such as diagonal SRE, temporal dependencies between extremes are of particular interest. As it turns out, neither of the notions discussed above is fully adequate for the investigation of these. Indeed, the SRE representation \eqref{eq:SRE} of the diagonal BEKK-ARCH(1) model appeals for an analysis of the serial extremal dependence directly on  $(\bb X_t)$  rather than on a standardized version. For SRE Markov chains such as \eqref{eq:SRE}, it has been shown by \cite{janssen:segers:2014}  that the spectral tail process satisfies the simple recursion
\begin{equation}\label{eq:simplerecursion}
\bb \Theta_t=\mathbb{M}_t\bb \Theta_{t-1},\qquad t\ge 1. 
\end{equation}
This multiplicative property has nice consequences and allows to translate the properties of multiplicative random walks to the extremes of multivariate time series. However, the degeneracy of the coordinates with lower tails discussed in Section \ref{sec:nsrv} propagates through time; If $\Theta_{0,i}=0$ a.s. then $\Theta_{t,i}=0$ a.s. as well for any $t\ge 1$. On the other hand, the standardized version has a more complicated angular measure (See Proposition \ref{prop:nonst}) and does not satisfy an SRE. Thus its serial extremal dependence is less explicit than the simple recursion \eqref{eq:simplerecursion}; see \cite{perfekt1997extreme} for details.

In order to treat the temporal dependence of the stationary solution $(\bb X_t)$, we will use the notion of Vector Scaling Regular Variation (VSRV) introduced in \cite{pedersen:wintenberger:2018} as follows:
\begin{defn}[VSRV]
A stationary time series $(\bb X_t)$ is VSRV of order $\bb \alpha=(\alpha_1,\ldots,\alpha_d)$ if $\P(|\bb X_{0,i}|>x)\sim a_ix^{\alpha_i} $ with $a_i>0$, $1\le i\le d$, $\|\bb X_0\|_{\bb\alpha}$ is regularly varying  and there exists  weak limits
\begin{equation}\label{eq:yt}
\lim_{x\to \infty}\P\Big(\|\bb X_0\|_{\bb \alpha}^{-1/{\bb \alpha}}  (\bb X_{0},\ldots,\bb X_{t}) \in \cdot \mid \|\bb X_0\|_{\bb \alpha}>x\Big)=\P\big((\widetilde{\bb \Theta}_0,\ldots,\widetilde{\bb \Theta}_t)\in\cdot\big)\,,
\end{equation}
for any $t\ge 0$.
\end{defn}
Note that a VSRV time series $(\bb X_t)$ with indices $\alpha_1,\ldots, \alpha_d$ is such that  $(\bb X_t^{\bb \alpha})$ is regularly varying with tail index $1$.  When $\alpha_1=\cdots=\alpha_d$ then  $(\bb X_t)$ is regularly varying and $(\widetilde{\bb \Theta}_t)$ coincides with the spectral tail process $({\bb \Theta}_t)$. It is one advantage of considering VSRV as it extends the useful approach of \cite{basrak:segers:2009} to GARCH models where the marginals $X_{0,i}$ have distributions $F_i$ with different tail indices. 
%

The next proposition shows that indeed any positive VSRV random vector $\bb X_0\in \R^d$ is also non-standard regularly varying.  
\begin{prop}\label{prop:nonst}
Let $\bb X_0$ be a positive VSRV random vector of order $\alpha=(\alpha_1, \dots, \alpha_d)$. Then $\bb X_0$ is non-standard regularly varying and the angular measure is given by
$$
\dfrac{\E\big[  \big\|\bb a^{-1} \widetilde{\bb \Theta}_{0}^{\bb \alpha}\big\| \; {\bb 1}\big(\big\|\bb a^{-1} \widetilde{\bb \Theta}^{ \bb \alpha}_{0}\big\|^{-1}\bb  a^{-1} \widetilde{\bb \Theta}_{0}^{ \bb \alpha}  \in \cdot \big)\big]}{\E\big[  \big\|\bb a^{-1}  \widetilde{\bb \Theta}_{0}^{\bb \alpha}\big\|\big]}\,,
$$
where $\bb a=(a_1, \dots, a_d)$.
\end{prop}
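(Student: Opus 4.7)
The plan is to reduce the claim to classical regular variation of the auxiliary vector $\bb Y_0 := \bb X_0^{\bb\alpha}$ and then handle the coordinatewise scaling by $\bb a^{-1}$ via a size-biasing of the angular measure.

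First I would exploit the identities $\|\bb x\|_{\bb\alpha} = \|\bb x^{\bb\alpha}\|$ and $\bigl(\|\bb X_0\|_{\bb\alpha}^{-1/\bb\alpha}\bb X_0\bigr)^{\bb\alpha} = \bb X_0^{\bb\alpha}/\|\bb X_0\|_{\bb\alpha}$, which follow directly from the definitions in Section~2.1. These imply that the VSRV property of $\bb X_0$ translates into classical regular variation of $\bb Y_0$ with tail index $1$ and spectral measure (on the max-norm unit sphere) equal to the law of $\widetilde{\bb\Theta}_0^{\bb\alpha}$; note that $\|\widetilde{\bb\Theta}_0^{\bb\alpha}\| = \|\widetilde{\bb\Theta}_0\|_{\bb\alpha} = 1$ a.s.\ by construction of $\widetilde{\bb\Theta}_0$.

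Next I would bring in the standardization. Positivity together with $1-F_i(x)\sim a_i x^{-\alpha_i}$ gives that each $\widetilde X_{0,i}$ has a Pareto(1) tail and that $\widetilde X_{0,i}/(a_i^{-1}Y_{0,i})\to 1$ coordinatewise as $Y_{0,i}\to\infty$. Combined with a Potter-type bound, this asymptotic equivalence transfers the vague convergence of $x\,\P(x^{-1}\bb a^{-1}\bb Y_0\in\cdot)$ to that of $x\,\P(x^{-1}\widetilde{\bb X}_0\in\cdot)$ on sets bounded away from the origin, which is non-standard regular variation of $\widetilde{\bb X}_0$. The former convergence itself follows from the previous paragraph by the standard polar-decomposition argument: writing $R=\|\bb Y_0\|$ and $\bb\Psi=\bb Y_0/R$, conditioning on $R>x$ and using the joint limit of $(R/x,\bb\Psi)$ (independent Pareto(1) and $\widetilde{\bb\Theta}_0^{\bb\alpha}$) produces the intensity measure
$$\nu(A)=\int_0^\infty \E\bigl[\mathbf{1}\{r\,\bb a^{-1}\widetilde{\bb\Theta}_0^{\bb\alpha}\in A\}\bigr]\,r^{-2}\,dr.$$

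To read off the angular measure on the max-norm unit sphere I would perform a polar decomposition of $\nu$, writing $\bb a^{-1}\widetilde{\bb\Theta}_0^{\bb\alpha}=\|\bb a^{-1}\widetilde{\bb\Theta}_0^{\bb\alpha}\|\cdot\bigl(\bb a^{-1}\widetilde{\bb\Theta}_0^{\bb\alpha}/\|\bb a^{-1}\widetilde{\bb\Theta}_0^{\bb\alpha}\|\bigr)$; the radial integration produces the size-biasing factor $\|\bb a^{-1}\widetilde{\bb\Theta}_0^{\bb\alpha}\|$ multiplying the Dirac mass on the angle, and normalizing by $\E\bigl[\|\bb a^{-1}\widetilde{\bb\Theta}_0^{\bb\alpha}\|\bigr]$ (finite since $\|\widetilde{\bb\Theta}_0\|_{\bb\alpha}=1$ a.s.\ and $\bb a$ has positive entries) yields exactly the claimed expression. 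The main technical point is the transfer step from $\bb a^{-1}\bb Y_0$ to $\widetilde{\bb X}_0$: the pointwise asymptotic equivalence is immediate, but upgrading it to vague convergence requires uniformity in the tail, which is where Potter-type bounds do the work.
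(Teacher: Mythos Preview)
Your proof is correct and follows essentially the same route as the paper: both reduce the VSRV property of $\bb X_0$ to classical index-$1$ regular variation of $\bb Y_0=\bb X_0^{\bb\alpha}$ with spectral vector $\widetilde{\bb\Theta}_0^{\bb\alpha}$, and then pass through the diagonal map $\bb a^{-1}$ to obtain the size-biased angular measure. The only difference is packaging: the paper performs a direct conditioning computation on $\{\|\bb X_0^{\bb\alpha}\|>xa_*\}$ with $a_*=\min_i a_i$ and simply identifies $\widetilde{\bb X}_0$ with $\bb a^{-1}\bb X_0^{\bb\alpha}$ at the outset, whereas you work via the intensity measure and an explicit polar decomposition, and you make the transfer from $\bb a^{-1}\bb Y_0$ to the true standardization $1/(1-F_i(X_{0,i}))$ explicit (a step the paper leaves implicit in the phrase ``marginal tails equivalent'').
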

 We remark that the angular measure of  $\bb X_0$ is completely determined by the spectral tail process $(\widetilde{\bb \Theta}_{t})$.  However its expression   is intricate because of the different marginal standardizations $\bb c$ whereas we will derive explicit expressions of $(\widetilde{\bb \Theta}_t)$ for many Markov chains in Section \ref{sec:vsrvmc}. We emphasize that this simplicity is the main motivation for introducing the notion of VSRV rather than using the more general notion of non-standard regular variation.
\begin{proof}
The standardized vector 
$$\widetilde{\bb X}_0= {\bb a}^{-1}{\bb X}_{0}^{\bb \alpha}
$$ has marginal tails equivalent to the standard Pareto marginally distributed vector $\big(1/(1- F_i(X_{0,i}))\big)_{1\le i\le d}$. Moreover $\|\bb X_0\|_{\bb \alpha}$ tail is  Pareto equivalent with tail index $1$ since an union bound yields
$$
\P(|X_{0,1}|^{\alpha_1}>x)\le\P(\|\bb X_0\|_{\bb \alpha}>x)\le \sum_{i=1}^d  \P(|X_{0,i}|^{\alpha_i}>x)
$$
and a sandwich argument concludes. Thus
$\|\widetilde{\bb X}_0\|$ is also regularly varying because, denoting $a_*=\min_{1\le i\le d}a_i$ and $a_0=\lim x^{-1}\P(\|\bb X_0\|_{\bb \alpha}>x)$, we have
\begin{align*}
\P( \|\widetilde{\bb X}_0\|>x \big)&=\P\big( \|{\bb a}^{-1}{\bb X}_{0}^{\bb \alpha}\|>x , a_{*}^{-1}\|{\bb X}_{0}^{\bb \alpha}\|>x\big)\\
&= \P\big( \|{\bb a}^{-1}{\bb X}_{0}^{\bb \alpha}\|>x |\, \|{\bb X}_{0}^{\bb \alpha}\|>xa_{*}\big)\P(\|{\bb X}_{0}^{\bb \alpha}\|>xa_{*})\\
&\sim\P\Big(\dfrac{ \|{\bb a}^{-1}{\bb X}_{0}^{\bb \alpha}\|}{\|{\bb X}_{0}^{\bb \alpha}\|}>\dfrac x{\|{\bb X}_{0}^{\bb \alpha}\|} |\, \|{\bb X}_{0}^{\bb \alpha}\|>xa_{*}\Big)a_0a_{*}^{-1}x^{-1}\\
&\sim\P\Big( \Big\|{\bb a}^{-1}\Big(\dfrac{{\bb X}_{0}}{\|{\bb X}_{0}^{\bb \alpha}\|^{1/{\bb \alpha}}}\Big)^{\bb \alpha}\Big\|>\dfrac x{\|{\bb X}_{0}^{\bb \alpha}\|}  |\, \|{\bb X}_{0}^{\bb \alpha}\|>xa_{*}\big)a_0a_{*}^{-1}x^{-1}\\
&\sim\P\big( \|{\bb a}^{-1}\widetilde{\bb \Theta}_{0}^{\bb \alpha}\|>a_{*} Y^{-1} \big)a_0a_{*}x^{-1}\\
&\sim \E[\|{\bb a}^{-1}\widetilde{\bb \Theta}_{0}^{\bb \alpha}\|]a_0x^{-1}.
\end{align*} 
We conclude that ${\bb X}_0$ is non-standard regularly varying and the angular measure is  the limit, as $x\to \infty$, of the ratio
\begin{align*}
\P\big(\|\widetilde{\bb X}_0\|^{-1}\widetilde{\bb X}_0 \in\cdot & |\; \|\widetilde{\bb X}_0\|>x \big)= \dfrac{\P\big(\|\widetilde{\bb X}_0\|^{-1}\widetilde{\bb X}_0 \in\cdot , \|\widetilde{\bb X}_0\|>x \big)}{\P( \|\widetilde{\bb X}_0\|>x \big)}\\
&= \dfrac{\P\big(\|\widetilde{\bb X}_0\|^{-1}\widetilde{\bb X}_0 \in\cdot , \|\widetilde{\bb X}_0\|>x |\, \|{\bb X}_{0}^{\bb \alpha}\|>xa_{*}\big)}{\P( \|\widetilde{\bb X}_0\|>x |\, \|{\bb X}_{0}^{\bb \alpha}\|>xa_{*}\big)}
\end{align*}
and the desired result follows by definition of $\widetilde{\bb \Theta}_{0}$.
\end{proof}


\subsection{VSRV Markov chains}\label{sec:vsrvmc}

We adapt the work of   \cite{janssen:segers:2014} to our framework. We consider a Markov chain $(\bb X_t)_{t\ge0}$ with values in $\R^d$ satisfying the recursive equation
\begin{equation}\label{eq:mc}
\bb X_t=\bb \Phi(\bb X_{t-1},Z_t),\qquad t\ge 0\,,
\end{equation}
where $\bb \Phi:\R^d\times {\cal E}\mapsto \R^d$ is measurable and $(Z_t)$ is an iid sequence taking values in a Polish space $\cal E$.
 We work under the following assumption, which is the vector scaling adaptation of  \cite[Condition 2.2]{janssen:segers:2014}. As above, we fix in advance the positive indices $\alpha_1,\ldots,\alpha_d$. 
\medskip

{\bf VS Condition for Markov chains:} {\em There exists a measurable function $\bb \phi:{\cal S}_\infty^{d-1}\times {\cal E}\mapsto \R^d$ such that, for all $e\in {\cal E}$,
$$
\lim_{x\to \infty} x^{-1/\bb \alpha}\bb \Phi(x^{1/\bb \alpha} \bb s(x),e)\to \bb \phi(\bb s,e)\,,
$$
whenever $\bb s(x)\to \bb s$ in ${\cal S}^{d-1}_\infty$. Moreover, if $\P(\bb \phi(\bb s,Z_0)=0)>0$ for some $\bb s\in {\cal S}^{d-1}_\infty$ then $Z_0\in {\cal W}$ a.s. for a subset ${\cal W}\subset {\cal E}$ such that, for all $e\in {\cal W}$,
$$
\sup_{\|\bb y\|_{\bb \alpha}\le x}\|\bb \Phi(\bb y,e)\|_{\bb \alpha}=O(x)\,\qquad x\to \infty\,. 
$$}

We extend  $\bb \phi$ over $\R^d\times {\cal E}$ thanks to the relation
$$
\bb \phi(\bb v,e)=\begin{cases}\|\bb v\|_{\bb \alpha}^{1/{\bb \alpha}} \bb \phi\Big(\anorm{\bb v}^{-1/\bb \alpha} v,e\Big)\qquad &\text{if}\qquad \bb v\neq 0,\\
0\qquad&\text{if} \qquad \bb v = 0\,.\end{cases}
$$
We have the following result which extends Theorem 2.1 of \cite{janssen:segers:2014}
\begin{thm}
If the Markov chain $(\bb X_t)$ satisfies the recursion \eqref{eq:mc} with $\bb \Phi$ satisfying the VS condition and if the vector $\bb X_0$ is VSRV with  positive indices $\alpha_1,\ldots,\alpha_d$ then 
$(\bb X_t)_{t\ge 0}$ is a VSRV process and its spectral tail process satisfies the relation
\begin{equation*}
\widetilde{\bb \Theta}_t=\bb \phi(\widetilde{\bb \Theta}_{t-1},Z_t)\,,\qquad t\ge 0\,.
\end{equation*}
started from $\widetilde{\bb \Theta}_0$, the spectral component of $\bb X_0$.
\end{thm}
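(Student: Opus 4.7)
The plan is to prove the statement by induction on $t$, following the strategy of \cite{janssen:segers:2014} but adapted to the vector scaling. The base case $t=0$ is exactly the hypothesis that $\bb X_0$ is VSRV with spectral component $\widetilde{\bb \Theta}_0$. Assume the joint convergence
$$\P\Big(\|\bb X_0\|_{\bb \alpha}^{-1/\bb \alpha}(\bb X_0,\ldots,\bb X_{t-1})\in \cdot \mid \|\bb X_0\|_{\bb \alpha}>x\Big)\Rightarrow \P\big((\widetilde{\bb \Theta}_0,\ldots,\widetilde{\bb \Theta}_{t-1})\in \cdot\big)$$
as $x\to \infty$. Since $Z_t$ is independent of $(\bb X_0,\ldots,\bb X_{t-1})$ and of the conditioning event, the convergence extends jointly with $Z_t$ (distributed as its natural law on the right-hand side). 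The induction will be closed by showing that $\|\bb X_0\|_{\bb \alpha}^{-1/\bb \alpha}\bb X_t$ converges in probability to $\bb \phi(\widetilde{\bb \Theta}_{t-1},Z_t)$ along this coupling.

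The key identity is, writing $\bb y_{t-1}(x)=\|\bb X_0\|_{\bb \alpha}^{-1/\bb \alpha}\bb X_{t-1}$ and $\lambda_x=\|\bb X_0\|_{\bb \alpha}$,
$$\|\bb X_0\|_{\bb \alpha}^{-1/\bb \alpha}\bb X_t = \lambda_x^{-1/\bb \alpha}\bb \Phi\big(\lambda_x^{1/\bb \alpha}\bb y_{t-1}(x), Z_t\big).$$
Passing to a Skorokhod coupling under the conditional law, we may assume $\lambda_x\to\infty$ and $\bb y_{t-1}(x)\to\widetilde{\bb \Theta}_{t-1}$ almost surely. On the event $\{\widetilde{\bb \Theta}_{t-1}\neq 0\}$, factor $\bb y_{t-1}(x)=\|\bb y_{t-1}(x)\|_{\bb \alpha}^{1/\bb \alpha}\bb s(x)$ with $\bb s(x)\in {\cal S}_\infty^{d-1}$ converging to $\bb s:=\anorm{\widetilde{\bb \Theta}_{t-1}}^{-1/\bb \alpha}\widetilde{\bb \Theta}_{t-1}$. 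Setting $\mu_x:=\lambda_x\|\bb y_{t-1}(x)\|_{\bb \alpha}\to \infty$, the $1/\bb \alpha$-homogeneity of the scaling combined with the first part of the VS condition yields
$$\lambda_x^{-1/\bb \alpha}\bb \Phi\big(\mu_x^{1/\bb \alpha}\bb s(x),Z_t\big) = \|\bb y_{t-1}(x)\|_{\bb \alpha}^{1/\bb \alpha}\,\mu_x^{-1/\bb \alpha}\bb \Phi\big(\mu_x^{1/\bb \alpha}\bb s(x),Z_t\big)\longrightarrow \anorm{\widetilde{\bb \Theta}_{t-1}}^{1/\bb \alpha}\bb \phi(\bb s,Z_t),$$
which by the homogeneous extension of $\bb \phi$ equals $\bb \phi(\widetilde{\bb \Theta}_{t-1},Z_t)=\widetilde{\bb \Theta}_t$, as desired.

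The main technical obstacle is the degenerate event $\{\widetilde{\bb \Theta}_{t-1}=0\}$, which can occur for $t\ge 2$ precisely when some earlier application of $\bb \phi$ produced a zero value. On this event one has $\bb \phi(\widetilde{\bb \Theta}_{t-1},Z_t)=0$ by the extension convention, so we must show $\|\bb X_0\|_{\bb \alpha}^{-1/\bb \alpha}\bb X_t\to 0$ along the coupling. Here the second clause of the VS condition is exactly what is needed: since some $\bb \phi(\bb s,Z_0)$ can vanish with positive probability (otherwise no coordinate of the spectral tail process ever degenerates and there is nothing to prove), $Z_t\in {\cal W}$ almost surely, and the uniform growth bound $\sup_{\anorm{\bb y}\le r}\anorm{\bb \Phi(\bb y,Z_t)}=O(r)$ gives
$$\anorm{\|\bb X_0\|_{\bb \alpha}^{-1/\bb \alpha}\bb X_t}=\lambda_x^{-1}\anorm{\bb \Phi(\bb X_{t-1},Z_t)}=O\big(\lambda_x^{-1}\anorm{\bb X_{t-1}}\big)=O\big(\anorm{\bb y_{t-1}(x)}\big)\to 0.$$
Combining both cases through a Slutsky-type argument, the joint weak limit in \eqref{eq:yt} at level $t$ is $(\widetilde{\bb \Theta}_0,\ldots,\widetilde{\bb \Theta}_{t-1},\bb \phi(\widetilde{\bb \Theta}_{t-1},Z_t))$, closing the induction and identifying the claimed recursion for the spectral tail process.
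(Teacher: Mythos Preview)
Your argument is correct and essentially reproduces, in the vector-scaling setting, the inductive continuous-mapping proof that underlies Theorem 2.1 of \cite{janssen:segers:2014}: Skorokhod coupling, factoring out the $\anorm{\cdot}$-radius, applying the VS limit on the sphere when $\widetilde{\bb\Theta}_{t-1}\neq 0$, and invoking the growth bound on $\mathcal{W}$ when $\widetilde{\bb\Theta}_{t-1}=0$. The only place where a reader might want one more line is the degenerate case: the bound $\lambda_x^{-1}\anorm{\bb\Phi(\bb X_{t-1},Z_t)}=O\big(\anorm{\bb y_{t-1}(x)}\big)$ really uses that for every $\epsilon>0$ one eventually has $\anorm{\bb X_{t-1}}\le \epsilon\lambda_x$, so the $O(r)$ growth bound with $r=\epsilon\lambda_x\to\infty$ gives $\lambda_x^{-1}\anorm{\bb\Phi(\bb X_{t-1},Z_t)}\le C_{Z_t}\epsilon$; this is implicit in your notation but worth making explicit.

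The paper proceeds differently and more economically: it observes that $(\bb Y_t)_{t\ge 0}:=(\bb X_t^{\bb\alpha})_{t\ge 0}$ is a Markov chain driven by $\widetilde{\bb\Phi}(\bb y,z):=\bb\Phi(\bb y^{1/\bb\alpha},z)^{\bb\alpha}$, checks that the VS condition on $\bb\Phi$ is precisely Condition~2.2 of \cite{janssen:segers:2014} for $\widetilde{\bb\Phi}$ (with limit $\widetilde{\bb\phi}(\bb y,z)=\bb\phi(\bb y^{1/\bb\alpha},z)^{\bb\alpha}$), and then applies their Theorem 2.1 as a black box to the standard regularly varying chain $(\bb Y_t)$. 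The spectral tail process of $(\bb Y_t)$ is $(\widetilde{\bb\Theta}_t^{\bb\alpha})$, and undoing the power map gives the recursion for $(\widetilde{\bb\Theta}_t)$. Your route is more self-contained and shows transparently how each clause of the VS condition is used; the paper's route is shorter and makes clear that the theorem is a direct corollary of \cite{janssen:segers:2014} after the power transformation $\bb x\mapsto\bb x^{\bb\alpha}$.
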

\begin{proof}
The result follows by an application of Theorem 2.1 in \cite{janssen:segers:2014} to the Markov chain $(\bb Y_t)_{t\ge0}=(\bb X_{t}^{\bb \alpha})_{t\ge 0}$. We have ${\bb Y}_0$ regularly varying since $\bb X_0^{\bb\alpha}$ is VSRV. Moreover 
$$
\bb Y_t= \widetilde{\bb \Phi}(\bb Y_{t-1},Z_t)\,,\qquad t\ge 0\,,
$$
with  $\widetilde{\bb \Phi}(x,z)=( {\bb \Phi}(x^{1/\bb \alpha},z))^{\bb \alpha}$. As the VS condition for Markov chain is the  vector scaling version of the condition 2.2. of \cite{janssen:segers:2014} on $\widetilde{\bb \Phi}$ associated to the limit $\widetilde{\bb \phi}((x ,z)) =\bb \phi((x^{1/\bb \alpha},z))^{\bb \alpha}$, i.e. 
$$
\lim_{x\to \infty} x^{-1}\widetilde{\bb \Phi}(x \bb s(x),e)\to \widetilde{\bb  \phi}(\bb s,e)\,
$$
whenever $\bb s(x)\to \bb s$ in ${\cal S}^{d-1}_\infty$. We obtain that the spectral tail process of $(\bb Y_t)_{t\ge0}$ satisfies the recursion 
$$
\bb \Theta^{\bb Y}_t = \widetilde{\bb \phi}(\bb \Theta^{\bb Y}_{t-1},Z_t)\,,\qquad t\ge 1\,.
$$
The desired result follows as $\widetilde{\bb \phi}((x ,z)) =\widetilde{\bb \phi}((x^{1/\bb \alpha},z))^{\bb \alpha}$ and $\widetilde{\bb \Theta}^{\bb \alpha}_t= {\bb \Theta}^{\bb Y}_t$, $t\ge 0$.
\end{proof}
We are specially interested in Stochastic Recurrence Equations (SRE) corresponding to the Markov chains
$$
\bb X_t=\bb \Phi(\bb X_{t-1},(\bb M,\bb Q)_t)=\bb M_t \bb X_{t-1}+\bb Q_t\,, \qquad t\ge 0\,.
$$
In this setting $(\bb M_t)$ are iid random $d\times d$ matrices and $(\bb Q_t)$ iid random vectors in $\R^d$. We have
\begin{prop}\label{prop:SREVSRV}
The SRE Markov chain $(\bb X_t)_{t\ge 0}$ satisfies Condition VS for positive indices $\alpha_1,\ldots,\alpha_d$ if and only if 
$M_{ij}=0$ a.s.~for any $(i,j)$ so that $\alpha_i>\alpha_j$. Then 
$$
\bb \phi\big(\bb s,(\bb M,\bb Q)\big)=\Big(\sum_{j=1}^d M_{ij}\bb 1_{\alpha_i=\alpha_j}s_j\Big)_{1\le i\le d}\,.
$$
\end{prop}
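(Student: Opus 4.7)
The plan is to expand $\bb\Phi(\bb y,(\bb M,\bb Q))=\bb M\bb y+\bb Q$ coordinate-wise after the VS-normalization and track which $M_{ij}$ can give a finite, non-degenerate limit. The $i$-th coordinate of $x^{-1/\bb\alpha}\bb\Phi(x^{1/\bb\alpha}\bb s(x),(\bb M,\bb Q))$ is
\[
\sum_{j=1}^d M_{ij}\,x^{1/\alpha_j-1/\alpha_i}s_j(x)\;+\;x^{-1/\alpha_i}Q_i,
\]
and I would split the sum according to the sign of $\alpha_j-\alpha_i$: the additive $Q$ term and the summands with $\alpha_j>\alpha_i$ vanish as $x\to\infty$; the summands with $\alpha_j=\alpha_i$ converge to $M_{ij}s_j$ by continuity from $\bb s(x)\to\bb s$; and the summands with $\alpha_j<\alpha_i$ carry an exploding factor $x^{1/\alpha_j-1/\alpha_i}\to\infty$, so they force $M_{ij}=0$ a.s. This directly gives sufficiency together with the stated formula for $\bb\phi$.

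For necessity I would test the VS condition against the canonical direction $\bb e_j\in {\cal S}^{d-1}_\infty$ for an offending pair $\alpha_i>\alpha_j$ with $\P(M_{ij}\ne 0)>0$: along this direction the $i$-th coordinate of the rescaled SRE reduces to $M_{ij}\,x^{1/\alpha_j-1/\alpha_i}+x^{-1/\alpha_i}Q_i$, which diverges on the positive-probability event $\{M_{ij}\ne 0\}$, ruling out even a componentwise weak limit. For the degeneracy clause of the VS condition I would then verify that, once the vanishing $M_{ij}=0$ for $\alpha_i>\alpha_j$ is in force, the deterministic sublinear bound $\|\bb\Phi(\bb y,e)\|_{\bb\alpha}=O(\|\bb y\|_{\bb\alpha})$ holds \emph{for every} realization $e$, so one can simply take ${\cal W}={\cal E}$. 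The key observation is that any nonzero summand $M_{ij}y_j$ appearing in the $i$-th coordinate has $\alpha_j\ge\alpha_i$, hence $|y_j|\le\|\bb y\|_{\bb\alpha}^{1/\alpha_j}\le\|\bb y\|_{\bb\alpha}^{1/\alpha_i}$ whenever $\|\bb y\|_{\bb\alpha}\ge 1$. Raising to the $\alpha_i$-th power this gives $|(\bb M\bb y)_i|^{\alpha_i}\le C_{\bb M}\,\|\bb y\|_{\bb\alpha}$, and the bounded additive perturbation $Q_i$ is harmless.

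The argument is essentially bookkeeping of scaling exponents, and I do not expect a serious obstacle; the one technical point I would be careful about is checking that the $\alpha_j=\alpha_i$ terms converge uniformly along any perturbation $\bb s(x)\to\bb s$, but this is immediate because those summands depend on $x$ only through $\bb s(x)$, whose convergence is assumed.
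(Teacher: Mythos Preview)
Your approach is correct and is essentially the same coordinate-wise expansion the paper uses: write the $i$-th rescaled coordinate as $\sum_j M_{ij}\,x^{1/\alpha_j-1/\alpha_i}s_j(x)+x^{-1/\alpha_i}Q_i$ and read off the trichotomy in the exponent $1/\alpha_j-1/\alpha_i$. Your treatment is in fact more complete than the paper's, which does not spell out the necessity via the test direction $\bb e_j$ and omits the verification of the degeneracy clause (your observation that $\alpha_j\ge\alpha_i$ forces $|y_j|\le\|\bb y\|_{\bb\alpha}^{1/\alpha_i}$ for $\|\bb y\|_{\bb\alpha}\ge1$, whence $\|\bb\Phi(\bb y,e)\|_{\bb\alpha}=O(\|\bb y\|_{\bb\alpha})$ for every $e$, so ${\cal W}={\cal E}$).
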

\begin{proof}
As $x\to\infty$ and $\bb s(x)\to \bb s$, we have 
\begin{align*}
&~ \lim_{x \to \infty} x^{-1/\bb \alpha}  \bb \Phi\big ((x^{1/\bb \alpha}) \bb  s(x),(\bb M,\bb Q)\big) ~=~  \lim_{x \to \infty} x^{-1/\bb \alpha}  \Big(\bb M (x^{1/\bb \alpha} \bb s(x)) + \bb Q\Big) \\
&~=~ \lim_{x \to \infty}\Big(\sum_{j=1}^d M_{ij}s(x)_jx^{1/\alpha_j-1/\alpha_i}\Big)_{1\le i\le d}\,.
\end{align*}
Each coordinate converges to $\sum_{j=1}^d M_{ij}1_{\alpha_i=\alpha_j}s_j$ for any $\bb s\in {\cal S}^{d-1}_\infty$ if and only if $M_{ij}=0$ a.s.~for any $(i,j)$ so that $\alpha_i>\alpha_j$.
\end{proof}

\begin{rem}\label{rem:SREVSRV}
In case of distinct $\alpha_i$'s, it means that the dynamic tail process depends only on the diagonal elements of $\bb M$. In general, specifying $\bb M_t$ to be diagonal, we ensure that if $\bb X_0$ is VSRV then the SRE process is VSRV  with
\[
\widetilde{\bb \Theta}_t=\bb M_t \widetilde{\bb \Theta}_{t-1},\qquad t\ge 1\,,
\]
whatever are the positive indices $\alpha_1,\ldots,\alpha_d$.
\end{rem}

\section{The diagonal SRE with distinct coefficients}\label{sec:asind}
In this section we will show that the marginals of the diagonal SRE with distinct coefficients are asymptotically independent. A standard argument reduces the discussion to the bivariate case. We consider the bivariate random recursive process $\bb X_t = \mathbb M_t \bb X_{t-1} +\bb Q_t$, defined by $\bb X_0=0$ and 

%
%
%
%


\begin{equation}
\label{eq:RDE}  \vect{X_{t,1}}{X_{t,2}} ~=~ \left(\begin{matrix}b_1+c_1 M_t & 0\\0& b_2+c_2 M_t\end{matrix}\right) \vect{X_{t-1,1}}{X_{t-1,2}}  + \bb Q_t.
\end{equation}
We assume that $(M_t)_{t \in \N}$ are iid random variables, $(\bb Q_t)_{t \in \N}$ are iid $\R^2$-valued random vectors independent of $(M_t)$ and one of the two following cases:
\begin{align}
\tag{Case I'}\label{case1'}  &b_2 = b_1 = 0, \qquad  c_2 >c_1 >0, \qquad & &\text{$M_t$ is $\R$-valued}  \\
\tag{Case II'}\label{case2'}  &b_2 \ge b_1 >0, \qquad c_2 > c_1 >0, \qquad \frac{c_2}{c_1} \ge \frac{b_2}{b_1}, \qquad & &\text{$M_t >0$ a.s.}
\end{align}
 We assume assumptions \eqref{eq:A1} -- \eqref{eq:A6} to hold for $i=1,2$, which gives in both cases that
$$ \alpha_1 > \alpha_2.$$

Under our assumptions, by the Kesten-Goldie-Theorem of \cite{Goldie1991,Kesten1973} applied to multiplicative factors with $b_i+c_iM$, $i=1,2$, we have for the random variables $X_i$, defined by \eqref{eq:defV}
\begin{equation}
\label{eq:tails} \lim_{u \to \infty} u^{\alpha_1} \P(X_1 >u) = a_1, \qquad \lim_{u \to \infty} u^{\alpha_2} \P( X_2 >u) = a_2
\end{equation}
for constants $a_1, a_2$ which are positive, see Section \ref{sec:univariate} for details.  We are going to prove that
\begin{equation} 
\label{eq:asympIndep} \lim_{u \to \infty} u\, \P\big( X_2 > u^{1/\alpha_2} \, , \, X_1 > u^{1/\alpha_1} \Big) ~=~0. \end{equation}
which by \eqref{eq:tails} is equivalent to the asymptotic independence
$$ \lim_{u \to \infty}  \P\big( X_2 > u^{1/\alpha_2} \, \big| \, X_1 > u^{1/\alpha_1} \Big) ~=~0.$$
of the extremes.

%
%
%
%
%

\subsection{Reduction to the case of nonnegative $M$ and $Q_i$}

From Definition \eqref{eq:defV}, it is obvious that we can bound $X_i$ by the following sums over nonnegative random variables:
$$ X_i ~\le~ \sum_{k=1}^\infty \prod_{\ell=1}^{k-1}|b_i+c_iM_\ell| |Q_{k,i}| ~=:~ \ob{X}_i$$
We notice that $\ob{X}_i$ satisfies the fixed point equation, in distribution, 
$$
\ob{X}_i \eqdist |b_i + c_iM| \ob{X}_i + |Q_i|\,,\qquad i=1,2\, ,
$$
(where $\eqdist$ denotes equality in law between random variables on both sides).
In particular, thanks to \eqref{eq:A1}--\eqref{eq:A4}, the Kesten-Goldie theorem, now used in the case of positive coefficients, applies and yields
\begin{equation}\label{eq:tailsXstar}  \lim_{u \to \infty} u\, \P\Big( \ob{X}_2 > u^{1/\alpha_2} \Big) = a_2^*>0 \qquad \lim_{u \to \infty} u \P \Big( \ob{X}_1 > u^{1/\alpha_1} \Big) = a_1^* >0.
\end{equation} Note that the tail indices $\alpha_1$, $\alpha_2$ remain unchanged thanks to their definition in \eqref{eq:A2}. Since $|X_i| \le X_i^\ast$, $i=1,2$, the result \eqref{eq:asympIndep} will follow from the relation
$$ \lim_{u \to \infty} u\, \P\Big( \ob{X}_2 > u^{1/\alpha_2} \, , \, \ob{X}_1 > u^{1/\alpha_1} \Big) ~=~0.$$

%
%
%

\subsection{Asymptotic independent diagonal SRE}

By the previous discussion, it is enough to consider the following cases
\begin{align}
\tag{Case I} \label{case1} &b_2 = b_1 = 0, \qquad  c_2 >c_1 >0, \qquad & &\text{$M_t$, $Q_i>0$ a.s},  \\
\tag{Case II} \label{case2} &b_2 \ge b_1 >0, \qquad c_2 > c_1 >0, \qquad \frac{c_2}{c_1} > \frac{b_2}{b_1}, \qquad & &\text{$M_t$, $Q_i>0$ a.s}.
\end{align}
We summarize these two cases under the condition $c_2/c_1>b_2/b_1\ge 1$ and $c_1>0$ (with the convention $0/0=1$).
We are going to prove the following result.

\begin{thm}\label{thm:asymptotic independence}
Assume \eqref{eq:A1}--\eqref{eq:A6} for $i=1,2$ with $c_2/c_1>b_2/b_1\ge 1$ and $c_1>0$.
Then we have
$$ \lim_{u \to \infty} u\, \P\big( X_2 > u^{1/\alpha_2} \, , \, X_1 > u^{1/\alpha_1} \Big) ~=~0,$$
i.e., $X_1$ and $X_2$ are asymptotically independent.\end{thm}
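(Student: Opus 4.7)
The target is the bound $\P(X_1 > u^{1/\alpha_1}, X_2 > u^{1/\alpha_2}) = o(u^{-1})$. Since we have already reduced to the case $M, Q_1, Q_2 \ge 0$ a.s., I would work with the positive series representation
\[
X_i = \sum_{k \ge 1} \Pi_{k-1}^{(i)} Q_{k,i}, \qquad \Pi_{k-1}^{(i)} := \prod_{\ell=1}^{k-1}(b_i + c_i M_\ell), \quad i=1,2,
\]
in which the joint law of $(X_1, X_2)$ is carried by the shared sequence $(M_\ell)$ and the dependence structure of the $\bb Q_k$. The pivotal structural observation is that $c_2/c_1 > b_2/b_1 \ge 1$ forces $g(m) := (b_2+c_2 m)/(b_1 + c_1 m)$ to be nondecreasing in $m \ge 0$ with $g(m) \ge 1$, whence
\[
\Pi_n^{(1)} = \Pi_n^{(2)} \prod_{\ell=1}^n g(M_\ell)^{-1}.
\]
Thus any realization of $(M_\ell)$ that pushes $\Pi_n^{(2)}$ to the critical level $u^{1/\alpha_2}$ through atypically large $M_\ell$'s simultaneously depresses $\Pi_n^{(1)}$ exponentially below $u^{1/\alpha_1}$.

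I would implement a single-big-jump decomposition for the heavy-tailed sum $X_2$. Fixing a small $\eta > 0$ and letting $K := \min\{k \ge 1 : \Pi_{k-1}^{(2)} Q_{k,2} > \eta\, u^{1/\alpha_2}\}$, a union bound gives
\[
\P(X_1 > u^{1/\alpha_1}, X_2 > u^{1/\alpha_2}) \le \sum_{k \ge 1} \P\big(\Pi_{k-1}^{(2)} Q_{k,2} > \eta\, u^{1/\alpha_2},\ X_1 > u^{1/\alpha_1}\big) + R(u),
\]
where $R(u)$ is the contribution of the event that $X_2 > u^{1/\alpha_2}$ while no single term exceeds $\eta\, u^{1/\alpha_2}$; standard Karamata/subexponential estimates, leveraging the $(\alpha_2+\epsilon)$-moment supplied by \eqref{eq:A3}, yield $R(u) = o(u^{-1})$. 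For each $k$ I would further decompose $X_1 = \sum_{\ell < k} \Pi_{\ell-1}^{(1)} Q_{\ell,1} + \Pi_{k-1}^{(1)} X_1^{(k)}$ with $X_1^{(k)} \eqdist X_1$ independent of $\sigma(M_\ell, \bb Q_\ell : \ell < k)$, then split $\{X_1 > u^{1/\alpha_1}\}$ according to whether the head, the tail $\Pi_{k-1}^{(1)} X_1^{(k)}$, or a single intermediate term dominates. In each sub-case the factor $\Pi_{k-1}^{(1)} = \Pi_{k-1}^{(2)} \prod_{\ell < k} g(M_\ell)^{-1}$ inherits strong control from the identity above.

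The main technical obstacle is the uniform joint large-deviation estimate
\[
\P\big(\Pi_n^{(1)} > \delta\, u^{1/\alpha_1},\ \Pi_n^{(2)} > \delta\, u^{1/\alpha_2}\big) \le C\, u^{-(1+\kappa)}, \qquad n \ge 1,
\]
for some $\kappa > 0$. I would obtain it from a two-parameter Chernoff bound with exponents $(\theta_1, \theta_2)$ chosen so that $\theta_1/\alpha_1 + \theta_2/\alpha_2 = 1 + \kappa$ and the cumulant $\Lambda(\theta_1, \theta_2) := \log \E[(b_1 + c_1 M)^{\theta_1} (b_2 + c_2 M)^{\theta_2}]$ satisfies $\Lambda(\theta_1, \theta_2) \le 0$. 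Since $\Lambda(\alpha_1, 0) = \Lambda(0, \alpha_2) = 0$ by \eqref{eq:A2} and $\Lambda$ is convex, the segment joining these two points lies in $\{\Lambda \le 0\}$; under Case~II the condition $c_2 b_1 > c_1 b_2$ renders $(\log A^{(1)}, \log A^{(2)})$ not a.s.\ linearly related and hence $\Lambda$ strictly convex along this segment, so $(\theta_1, \theta_2)$ can be pushed slightly outward while preserving $\Lambda \le 0$. Case~I ($b_1 = b_2 = 0$), where $g \equiv c_2/c_1$ and strict convexity degenerates, is handled separately through the exact identity $\Pi_n^{(1)} = (c_1/c_2)^n \Pi_n^{(2)}$ which forces the joint event to require $n \gtrsim \log u$, the tail sum then being geometrically small. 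Finally, the alternative scenario where the big jump in $X_2$ comes from $Q_{k,2}$ itself being large while $X_1 > u^{1/\alpha_1}$ is simultaneously produced from index $k$ is precisely where \eqref{eq:A6} enters: it forbids the simultaneous occurrence of $Q_{k,2} \gtrsim u^{1/\alpha_2}/\Pi_{k-1}^{(2)}$ and $Q_{k,1} \gtrsim u^{1/\alpha_1}/\Pi_{k-1}^{(1)}$ by a $\log$-factor improvement over the marginal tail of the ratio $|Q_2|/|Q_1|$.
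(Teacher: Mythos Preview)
Your route is genuinely different from the paper's. The paper conditions on the first exit time $T_u=\inf\{n:X_{n,1}>u^{1/\alpha_1}\}$, imports precise large-deviation control of $T_u$ from \cite{Buraczewski.etal:2016a}, and under the $\alpha_1$-tilted measure shows that on the typical time scale $T_u\approx \log u/(\alpha_1\mu_{1|1})$ the drift of $\sum(W_{l,2}-W_{l,1})$ forces $X_{T_u,2}\le u^{\eta}$ with $\eta<1/\alpha_2$; the crucial input is the strict inequality $\alpha_2\mu_{2|1}<\alpha_1\mu_{1|1}$, obtained by Jensen. Your two-parameter Chernoff bound is the ``dual'' of that inequality (strict convexity of $\Lambda$ along the segment $[(\alpha_1,0),(0,\alpha_2)]$), and it is sound---incidentally, your worry about Case~I is unfounded: there $\Lambda(\theta_1,\theta_2)=\theta_1\log c_1+\theta_2\log c_2+\psi(\theta_1+\theta_2)$, and along the segment the second directional derivative is $\psi''\cdot(\alpha_1-\alpha_2)^2>0$ since $\alpha_1\neq\alpha_2$ and $M$ is non-degenerate by \eqref{eq:A4}.

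The real gap is the remainder $R(u)$. The assertion that ``$X_2>u^{1/\alpha_2}$ with no single summand $\Pi_{k-1}^{(2)}Q_{k,2}$ exceeding $\eta u^{1/\alpha_2}$'' has probability $o(u^{-1})$ is \emph{not} a standard Karamata/subexponential fact. For a Kesten--Goldie perpetuity the tail is produced by an excursion of the multiplicative walk $\log\Pi_n^{(2)}$ to level $\approx \tfrac{1}{\alpha_2}\log u$; near the peak of that excursion many consecutive summands are of comparable size, so there is no a-priori reason one of them must exceed a \emph{fixed} fraction $\eta$ of $u^{1/\alpha_2}$. Establishing this requires exactly the conditional exit-time control that the paper borrows from \cite{Buraczewski.etal:2016a} (their Lemma~4.3), i.e.\ the same deep ingredient you are trying to bypass. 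A second, related gap: in your sub-case where the ``head'' $\sum_{\ell<k}\Pi_{\ell-1}^{(1)}Q_{\ell,1}$ carries $X_1>u^{1/\alpha_1}$, you have no information about $\Pi_{k-1}^{(1)}$ itself, so the joint-product Chernoff bound at index $k$ does not apply; handling this forces you back into an exit-time analysis for $X_1$ on the first $k-1$ steps. In short, the Chernoff idea is attractive, but as written the argument outsources the core difficulty to $R(u)$ and the head term without providing a mechanism to control them.
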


The basic tool in the proof is to analyze the behavior of $X_2$ under an exponential change of measure  that favors large values for $X_1$. Namely, we consider the  probability measure $\P^{\alpha_1}$, under which $(M_n)$ is still an iid sequence, but with the new law
$$ 
\P^{\alpha_1}(M \in \cdot): =\E\big[(b_1+c_1 M)^{\alpha_1}1(M\in\cdot)\big].
$$
The law of the sequences $(Q_{n,i})$ remains unchanged and independent of $(M_n)$ under $\P^{\alpha_1}$.

Considering the random variables $$W_{1}:=\log (b_1 +c_1 M) \qquad \text{and} \qquad W_{2}:=\log (b_2+c_2 M),$$
with associated iid sequences $W_{n,i}:=\log(b_i+c_i M_n)$, we denote their respective $\P^{\alpha_1}$-drift by
$$
\mu_{j|1} ~:=~ \E[\log (b_j+c_jM) (b_1+c_1M)^{\alpha_1}] ~=~ \E^{\alpha_1} \big[W_{j}\big],\qquad j=1,2.
$$
We have the following result.

\begin{lem} In both \eqref{case1} and \eqref{case2}, 	it holds that
	\begin{equation}\label{eq:condition} \alpha_2 \mu_{2|1} < \alpha_1\mu_{1|1}. 
	\end{equation} 
\end{lem}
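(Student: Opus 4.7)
The plan is a Hölder-type interpolation argument. For $t\in[0,1]$, define
\[
g(t) := \log \E\bigl[(b_1+c_1 M)^{(1-t)\alpha_1}(b_2+c_2 M)^{t\alpha_2}\bigr].
\]
By \eqref{eq:A2} we have $g(0)=g(1)=0$, and Hölder's inequality with conjugate exponents $1/(1-t)$ and $1/t$ shows that $g$ is convex on $[0,1]$; the moment condition \eqref{eq:A3} legitimates differentiating under the expectation.

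The key step is to verify $g\not\equiv 0$. Equality in Hölder would produce a constant $K>0$ with $(b_1+c_1 M)^{\alpha_1}=K(b_2+c_2 M)^{\alpha_2}$ a.s., i.e.\ the function
\[
f(m) := \frac{(b_1+c_1 m)^{\alpha_1}}{(b_2+c_2 m)^{\alpha_2}}
\]
would be constant on the support of $M$. In Case~I, $f(m)=(c_1^{\alpha_1}/c_2^{\alpha_2})\,m^{\alpha_1-\alpha_2}$ is strictly monotone on $(0,\infty)$ since $\alpha_1>\alpha_2$, forcing $M$ to be a.s.\ constant, which contradicts \eqref{eq:A4}. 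In Case~II, logarithmic differentiation gives
\[
\frac{f'(m)}{f(m)} = \frac{\alpha_1 c_1}{b_1+c_1 m}-\frac{\alpha_2 c_2}{b_2+c_2 m},
\]
which has at most one zero on $(0,\infty)$, so $f$ is either strictly monotone or admits a single interior extremum; in either case $f^{-1}(\{K\})$ has at most two elements. Since any two-point distribution lies in a lattice $a+h\Z$, \eqref{eq:A4} forces the support of $\log(b_1+c_1 M)$, and hence of $M$, to contain at least three points, contradicting the constancy of $f(M)$.

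With $g$ convex, vanishing at the endpoints, and strictly negative at some $t_0\in(0,1)$, the monotonicity of the difference quotients of a convex function yields $g'(0^+)\le g(t_0)/t_0<0$. Differentiating under the expectation at $t=0$ gives
\[
g'(0^+) = \E\bigl[(b_1+c_1 M)^{\alpha_1}\bigl(\alpha_2\log(b_2+c_2 M)-\alpha_1\log(b_1+c_1 M)\bigr)\bigr] = \alpha_2\mu_{2|1}-\alpha_1\mu_{1|1},
\]
and the lemma follows.

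The main obstacle is the nondegeneracy step in Case~II: the parameter constraints $c_2/c_1>b_2/b_1\ge 1$ and $\alpha_1>\alpha_2$ alone do not rule out the a.s.\ relation $(b_1+c_1 M)^{\alpha_1}\propto(b_2+c_2 M)^{\alpha_2}$; one must combine the analytic structure of $f$ (at most two level-set preimages) with the at-least-three-point consequence of the non-arithmeticity assumption \eqref{eq:A4}.
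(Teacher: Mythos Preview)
Your argument is correct and is, at bottom, the same idea as the paper's proof, only packaged more elaborately. The paper dispenses with the interpolation function $g$ and applies Jensen's inequality for the concave logarithm directly under the tilted law $\P^{\alpha_1}$ (density $(b_1+c_1M)^{\alpha_1}$):
\[
\alpha_2\mu_{2|1}-\alpha_1\mu_{1|1}
=\E^{\alpha_1}\!\Big[\log\tfrac{(b_2+c_2M)^{\alpha_2}}{(b_1+c_1M)^{\alpha_1}}\Big]
<\log\E^{\alpha_1}\!\Big[\tfrac{(b_2+c_2M)^{\alpha_2}}{(b_1+c_1M)^{\alpha_1}}\Big]
=\log\E\big[(b_2+c_2M)^{\alpha_2}\big]=0,
\]
the last step by \eqref{eq:A2}. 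Your $g$ is exactly the log-moment curve whose convexity encodes this Jensen inequality, and your $g'(0^+)$ is the left-hand side above; the secant inequality $g'(0^+)\le g(1)-g(0)$ for a convex function with $g(0)=g(1)=0$ is precisely the paper's one-line Jensen step. So the two proofs coincide, the paper's being three lines.

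Where they differ is in the nondegeneracy step (strict inequality in Jensen, equivalently $g\not\equiv0$). The paper simply asserts that $(b_2+c_2M)^{\alpha_2}/(b_1+c_1M)^{\alpha_1}$ is not a.s.\ constant ``due to the different exponents and condition \eqref{eq:A4} which implies that $M$ is not degenerate''. Your Case~II analysis, showing that the level sets of $f$ contain at most two points, is more careful than this. Be aware, though, that your concluding sentence is not quite right under the usual convention: in the Kesten--Goldie setting ``non-arithmetic'' means \emph{not concentrated on $h\Z$}, and a two-point set such as $\{1,\pi\}$ is non-arithmetic in that sense, so \eqref{eq:A4} alone does not force the support of $\log(b_1+c_1M)$ to have three points. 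This is a minor technicality that the paper's own proof also glosses over; your level-set analysis already reduces the problem to a very thin exceptional configuration.
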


\begin{proof}
	Using Jensen's inequality under the change of measure, we obtain 
	\begin{align*}
	\alpha_2 \mu_{2|1} - \alpha_1 \mu_{1|1} ~=&~ \E \Big[ \log \bigg( \frac{(b_2+c_2M)^{\alpha_2}}{(b_1+c_1M)^{\alpha_1}} \bigg) (b_1+c_1M)^{\alpha_1} \Big] \\
	=&~ \E^{\alpha_1} \Big[ \log \bigg( \frac{(b_2+c_2M)^{\alpha_2}}{(b_1+c_1M)^{\alpha_1}} \bigg)  \Big]\\
	 ~<&~ \log  \E^{\alpha_1} \Big[  \bigg( \frac{(b_2+c_2M)^{\alpha_2}}{(b_1+c_1M)^{\alpha_1}} \bigg) \Big]
	\\ 
	=&~ \log  \E \Big[   \frac{(b_2+c_2M)^{\alpha_2}}{(b_1+c_1M)^{\alpha_1}}  (b_1+c_1M)^{\alpha_1} \Big] ~=~ 0 
	\end{align*}
	The strict inequality holds since $\log$ is strictly convex and the random variable $(b_2+c_2M)^{\alpha_2}/(b_1 +c_1 M)^{\alpha_1}$ is not constant a.s., due to the different exponents and condition \eqref{eq:A4} which implies that $M$ is not degenerate.
\end{proof}

\medskip

\begin{proof}[Proof of Theorem \ref{thm:asymptotic independence}] 
We are going to study partial sums converging to the random variables $X_1$, $X_2$ given by \eqref{eq:defV}, namely
\begin{equation}\label{eq:sumV}
X_{j:m,i} ~:=~ \sum_{k=j+1}^m  \prod_{l=1}^{k-1}(b_i+c_i M_l)Q_{k,i}, \qquad i=1,2.
\end{equation}
We write $X_{n,i}:=X_{0:n,i}$ and observe that $X_i = \lim_{n \to \infty} X_{n,i} = \sup_{n \ge 0} X_{n,i}$ a.s.
Note the distinction between the Markov chain $(X_{t,i})$ (the {\em forward process})  and the almost surely convergent series $(X_{n,i})$ defined above (the {\em backward process}); see  \cite{letac1986contraction}. 

\medskip

\step We gain additional control by introducing the first exit time for $(X_{n,1})$,
$$ {\Tu} := \inf \big\{ n \in \N \, : \, X_{n,1}  > u^{1/\alpha_1} \big\}\,. $$
As $X_i= \sup_{n\ge 0} X_{n,i}$ for $i=1,2$ we have $\{X_1  > u^{1/\alpha_1}\}=\{{\Tu}<\infty\}$. 
By \eqref{eq:tailsXstar} we have 
\begin{equation}\label{eq:kg}
\lim_{u \to \infty} u\cdot \P({\Tu}<\infty) >0.
\end{equation}
Thus, the desired result will follow from the relation 
\begin{equation}\label{eq:condtu}
\lim_{u \to \infty}  \P\big( X_2> u^{1/\alpha_2} \, \big| \, {\Tu}<\infty\big) ~=~0.
\end{equation}

On the set $\{{\Tu} < \infty\}$, it holds
\begin{equation}\label{eq:decomposing V} 
X_2 = X_{\Tu,2} + \prod_{l=1}^{\Tu}(b_2+c_2 M_l) X_{{\Tu}:\infty,2}. 
\end{equation}
The simple inclusion
$$ \{ X_2 > s\} \subset \underbrace{\big\{ X_{{\Tu},2} > u^{1/\alpha_2}/2 \big\}}_ {=:~A _u }\cup \underbrace{\Big\{ \prod_{l=1}^{\Tu}(b_2+c_2 M_l) X_{{\Tu}:\infty,2}> u^{1/\alpha_2}/2 \Big\}}_{=:~B_u}$$
allows us to consider the contributions in \eqref{eq:decomposing V} separately. The following lemma, to be proved subsequently, provides stronger control and is the crucial ingredient for evaluating the contributions of $A_u$ and $B_u$. The proof of this lemma is deferred to the appendix.

\begin{lem}\label{lem:eqcond}
For any $\epsilon >0$, define the set $C_u(\epsilon)$ as the intersection
\begin{multline*}
\Big\{ {\Tu}\le L_u\Big\}\cap \Big\{ {X_{\Tu,1}} \le u^{\tfrac{1 + \epsilon}{\alpha_1}} \Big\}\cap\Big\{\max_{1\le k\le L_u}\dfrac{Q_{k,2}}{Q_{k,1}} \le  u^{\varepsilon/\alpha_1}\Big\}\\
 \cap \Big\{ \sum_{l=1}^{\Tu}(W_{l,2}-W_{l,1})-T_u(\mu_{2|1}-\mu_{1|1})\le \epsilon\Tu\Big\}  \cap \Big\{ \sum_{l=1}^{L_u} W_{l,1} \le \frac{1+\epsilon}{\alpha_1} \log u\Big\}
\end{multline*}
where $L_u:=\log(u)/(\mu_{1|1}\alpha_1)+Cf(u)$, $f(u):= \sqrt{log(u) \cdot \log(\log(u))}$ and $C$ is a (suitably large) constant that can be chosen indepently of $\epsilon$. \\
Then it holds that 
$$\lim_{u\to \infty}\P \big( \big\{X_2>u^{1/\alpha_2} \big\} \cap  C_u(\epsilon)  \, \big| \, {\Tu} < \infty \big)
= \lim_{u\to \infty} \P\big( X_2 > u^{1/\alpha_2} \, \big| \, {\Tu} < \infty\big)
$$
if either of the limits exists.
%
\end{lem}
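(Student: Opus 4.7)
The plan is to show $\P(C_u(\epsilon)^c\cap\{\Tu<\infty\})=o(\P(\Tu<\infty))=o(1/u)$, which by \eqref{eq:kg} forces $\P(C_u(\epsilon)\mid \Tu<\infty)\to 1$; intersecting $\{X_2>u^{1/\alpha_2}\}$ with $C_u(\epsilon)$ then cannot alter the conditional limit if it exists. Writing $C_u(\epsilon)^c=B_1\cup\cdots\cup B_5$, where $B_i$ is the complement of the $i$-th constraint listed in the lemma, a union bound reduces the task to bounding $\P(B_i\cap\{\Tu<\infty\})=o(1/u)$ for each $i$. The unifying technical tool is the exponential change of measure $\P^{\alpha_1}$ associated with the mean-one martingale $\prod_{l\le n}(b_1+c_1 M_l)^{\alpha_1}$: under $\P^{\alpha_1}$ the $(M_l)$ remain iid with tilted law giving positive drift $\mu_{1|1}$ to $W_{\cdot,1}$, while $(\bb Q_l)$ retains its joint law and remains independent of $(M_l)$. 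Since $\Tu$ is a stopping time for $\mathcal{F}_n=\sigma((M_l,\bb Q_l):l\le n)$,
$$
\P(A\cap\{\Tu<\infty\})=\E^{\alpha_1}\bigl[\exp(-\alpha_1 S^{(1)}_{\Tu})\,\mathbf{1}_{A\cap\{\Tu<\infty\}}\bigr],\qquad S^{(1)}_n:=\sum_{l=1}^n W_{l,1},
$$
and on $\{\Tu<\infty\}$ the weight $\exp(-\alpha_1 S^{(1)}_{\Tu})$ is of order $u^{-1}$ up to a tight overshoot---this is the mechanism that converts $\P^{\alpha_1}$-rare events into $o(1/u)$ bounds under $\P$.

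The events $B_2$ and $B_5$ are handled directly. For $B_2=\{X_{\Tu,1}>u^{(1+\epsilon)/\alpha_1}\}$, the monotonicity $X_{\Tu,1}\le \ob{X}_1$ together with \eqref{eq:tailsXstar} applied at level $u^{1+\epsilon}$ gives $\P(B_2,\Tu<\infty)\lesssim u^{-(1+\epsilon)}=o(1/u)$. For $B_5=\{S^{(1)}_{L_u}>(1+\epsilon)\log u/\alpha_1\}$, the Chernoff bound at parameter $\alpha_1$ combined with $\E[(b_1+c_1M)^{\alpha_1}]=1$ from \eqref{eq:A2} yields $\P(B_5)\le u^{-(1+\epsilon)}$.

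For $B_1=\{\Tu>L_u\}$, the change of measure recasts the bound as $\E^{\alpha_1}[e^{-\alpha_1 S^{(1)}_{\Tu}}\mathbf{1}(\Tu>L_u)]$; under $\P^{\alpha_1}$ the strong law gives $\Tu/\log u\to 1/(\alpha_1\mu_{1|1})$ a.s., and a Bennett-type moderate-deviation estimate on $\Tu-\log u/(\alpha_1\mu_{1|1})$ at scale $Cf(u)=C\sqrt{\log u\log\log u}$ yields $\P^{\alpha_1}(\Tu>L_u)=o((\log u)^{-k})$ for every $k$ when $C$ is taken large enough, whence $\P(B_1,\Tu<\infty)=o(1/u)$. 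For $B_4$, Cramér's theorem applied under $\P^{\alpha_1}$ to the iid centered increments $(W_{l,2}-W_{l,1})-(\mu_{2|1}-\mu_{1|1})$ gives $\P^{\alpha_1}(B_4\mid \Tu=n)\le e^{-c(\epsilon)n}$ for $n\asymp\log u$, which combined with the $u^{-1}$ weight produces an extra polynomial factor $u^{-c'}$, hence $o(1/u)$. For the ratio event $B_3$, I condition on $((M_l)_{l\ge 1},(Q_{l,1})_{l\ge 1})$: since $((M_l),\bb Q_l)$ are iid, $(Q_{l,2})_l$ is then conditionally independent with $Q_{l,2}$ drawn from the conditional law of $Q_2\mid Q_1=Q_{l,1}$, and a union bound gives $\P^{\alpha_1}(B_3\mid\cdot)\le\sum_{k\le L_u}\P(Q_2>u^{\epsilon/\alpha_1}Q_{k,1}\mid Q_1=Q_{k,1})$. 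Integrating against the $u^{-1}$-order weight, together with $L_u=O(\log u)$ and $\P(Q_2/Q_1>u^{\epsilon/\alpha_1})=o(1/\log u)$ from \eqref{eq:A6} (the $\bb Q$-law being preserved under tilting), yields the required $o(1/u)$.

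The main obstacle is the ratio event $B_3$: the coordinates $Q_{k,1}$ and $Q_{k,2}$ of the same iid vector $\bb Q_k$ are in general not independent, so one cannot simply decouple $B_3$ from the hitting-time event. This is precisely why \eqref{eq:A6} is formulated in terms of the ratio $|Q_2|/|Q_1|$ rather than as two separate marginal tail conditions, and the $\log u$-factor appearing there is exactly what absorbs the $L_u=O(\log u)$ arising from the union bound. The auxiliary scale $f(u)=\sqrt{\log u\log\log u}$ in the definition of $L_u$ is calibrated so as to dominate the $\sqrt{\log u}$-size CLT fluctuations of the tilted random walks by a logarithmic factor while remaining negligible compared to $\log u/(\alpha_1\mu_{1|1})$, so that the $u^{-1}$ magnitude produced by the change of measure is preserved up to subleading multiplicative corrections.
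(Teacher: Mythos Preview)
Your overall plan---reduce to $\P(C_u(\epsilon)^c\mid T_u<\infty)\to 0$, split $C_u(\epsilon)^c$ into five pieces, and use the tilting $\P^{\alpha_1}$---is exactly the paper's strategy, and your treatment of the overshoot event $B_2$ and the random-walk event $B_5$ is correct and essentially identical to the paper's. For $B_1$ the paper simply quotes a conditional large-deviation estimate for $T_u$ from \cite[Lemma~4.3]{Buraczewski.etal:2016a}; your Bennett-type sketch points in the right direction but be aware that controlling the weight $e^{-\alpha_1 S^{(1)}_{T_u}}$ simultaneously with $\{T_u>L_u\}$ is not automatic---this is precisely what the quoted reference supplies.

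There are two genuine gaps. For $B_4$, your claim that ``Cram\'er's theorem gives $\P^{\alpha_1}(B_4\mid T_u=n)\le e^{-c(\epsilon)n}$'' is not valid: the stopping time $T_u$ is a function of $(M_l,Q_{l,1})_{l\ge 1}$, and conditioning on $\{T_u=n\}$ destroys the iid structure of the increments $\Delta W_l=W_{l,2}-W_{l,1}$ (they depend on the \emph{same} $M_l$). Under $\P^{\alpha_1}$ the conditional law of $(\Delta W_l)_{l\le n}$ given $\{T_u=n\}$ need not have mean $\Delta\mu$, so you cannot multiply the ``$u^{-1}$ weight'' by an unconditional Cram\'er bound. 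The paper circumvents this by fusing the change-of-measure weight and the Chernoff factor into a single process
\[
Z_n(\epsilon,\lambda)=\Big(\dfrac{X_{n,1}}{e^{S_n^{(1)}}}\Big)^{\alpha_1}\,e^{\lambda\sum_{l\le n}(\Delta W_l-\Delta\mu-\epsilon)}
\]
and proving directly, via a recursive inequality and Hoeffding's lemma (using that $\Delta W_l$ is bounded in \ref{case2}, while $B_4=\emptyset$ in \ref{case1}), that $\sup_n\E^{\alpha_1}[Z_n(\epsilon,\lambda)]<\infty$; this is what justifies evaluating at the random time $T_u$ without conditioning.

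For $B_3$, your conditioning on $\sigma((M_l,Q_{l,1})_l)$ leaves you with
\[
\E\Big[\mathbf{1}_{\{T_u<\infty\}}\sum_{k\le L_u}\P\big(Q_{k,2}>u^{\varepsilon/\alpha_1}Q_{k,1}\mid Q_{k,1}\big)\Big],
\]
but $T_u$ depends on each $Q_{k,1}$, so you cannot factor this as $\P(T_u<\infty)\cdot L_u\cdot\P(Q_2/Q_1>u^{\varepsilon/\alpha_1})$. The paper decouples by splitting $X_1=\big(\sum_{j\neq k}\Pi_{j-1}Q_{j,1}\big)+\Pi_{k-1}Q_{k,1}$: the first sum is independent of $(Q_{k,1},Q_{k,2})$, which makes \eqref{eq:A6} applicable; the residual term $\Pi_{k-1}Q_{k,1}$ is handled by a Markov bound at an exponent $\kappa\in(\alpha_1/(1+\varepsilon),\alpha_1)$. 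These two decouplings are the missing ingredients in your argument.
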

\setcounter{stepnumber}{1}
\step Considering the event $A_u$, we have, using $b_1\le b_2$ and $c_1 < c_2$ and the controls  provided by $C_u(\varepsilon)$, that
\begin{align}
X_{\Tu,2} ~=&~ \sum_{k=1}^{\Tu}  \prod_{l=1}^{k-1}(b_2+c_2 M_l)Q_{k,2}\nonumber \\
\le&~  \Big(\max_{1\le k\le \Tu}\dfrac{Q_{k,2}}{Q_{k,1}}\Big)\sum_{k=1}^{\Tu}  \prod_{l=1}^{k-1}\dfrac{b_2+c_2 M_l}{b_1+c_1 M_l}(b_1+c_1 M_l)Q_{k,1} \nonumber \\
 \le&~  \Big(\max_{1\le k\le L_u}\dfrac{Q_{k,2}}{Q_{k,1}}\Big)\Big(\prod_{l=1}^{\Tu-1}\dfrac{b_2+c_2 M_l}{b_1+c_1 M_l}\Big)\sum_{k=1}^{\Tu} \prod_{l=1}^{k-1}(b_1+c_1 M_l)Q_{k,1}  \nonumber \\
\le&~\Big(\prod_{l=1}^{\Tu-1}\dfrac{b_2+c_2 M_l}{b_1+c_1 M_l}\Big)\Big(\max_{1\le k\le L_u}\dfrac{Q_{k,2}}{Q_{k,1}}\Big)X_{\Tu,1}  \nonumber 
 \\
\le&~\Big(\prod_{l=1}^{\Tu}\dfrac{b_2+c_2 M_l}{b_1+c_1 M_l}\Big)u^{\epsilon/\alpha_1}u^{(1+\epsilon)/\alpha_1} \nonumber \\
\le&~e^{\sum_{l=1}^{\Tu}W_{l,2}-W_{l,1}}\,u^{(1+2\epsilon)/\alpha_1}\,.  \label{eq:boundX2star}
\end{align}
Now we use that on $C_u(\varepsilon)$ we have the relation 
$$
\sum_{l=1}^{\Tu}(W_{l,2}-W_{l,1}) \le T_u(\mu_{2|1}-\mu_{1|1}) + \epsilon\Tu ~\le~ L_u (\mu_{2|1}-\mu_{1|1} + \epsilon)
$$
so that \eqref{eq:boundX2star} yields
\begin{align} \frac{\log X_{\Tu,2}}{\log u} ~\le&~ \frac{\mu_{2|1}-\mu_{1|1}+\epsilon}{\mu_{1|1} \alpha_1}+  + \frac{1+3 \epsilon}{\alpha_1} \nonumber \\
=&~ \frac{\mu_{2|1}+\epsilon(1+3 \mu_{1|1})}{\mu_{1|1}\alpha_1} ~=~ \frac{1}{\alpha_2} \frac{\alpha_2 \mu_{2|1}+ \epsilon \alpha_2(1+ 3 \mu_{1|1})}{\alpha_1 \mu_{1|1}} \label{eq:bound313}.
\end{align}
Here we have used that 
$$\exp\big(\sqrt{\log u}\big)\log u = \exp\big(\log u / \sqrt{\log u}\big)\log u = u^{1/\sqrt{\log u}}\log u \le u^{\epsilon/\alpha_1} $$ for any fixed $\epsilon>0$, as soon as $u$ is large enough.

Under the condition \eqref{eq:condition} it is always possible to find $\epsilon$ so small that
$$
\eta:=\frac{1}{\alpha_2} \frac{\alpha_2 \mu_{2|1}+ \epsilon \alpha_2(1+ 3 \mu_{1|1})}{\alpha_1 \mu_{1|1}} ~\le~ \frac{1}{\alpha_2}-\epsilon\,
$$
and hence by \eqref{eq:bound313}, 
$$
\{X_{\Tu,2} > u^{1/\alpha_2}/2 \}\cap C_u(\epsilon) \subset\big\{u^\eta \ge X_{\Tu,2} > u^{1/\alpha_2}/2\big\}= \emptyset
$$
for $u$ sufficiently large.
 It follows that the first term $A_u$ in \eqref{eq:decomposing V} does not contribute on $C_u(\varepsilon)$.
 
 \medskip
 
 \step 
 Turning to $B_u$, we start by bounding the multiplicative factor on $C_u(\epsilon)$. By Lemma \ref{lem:eqcond},
 \begin{align*}
 \prod_{l=1}^{\Tu}(b_2+c_2 M_l) ~=&~ \exp \bigg( \sum_{l=1}^{\Tu} (W_{l,2} - W_{l,1}) \bigg) \exp \bigg( \sum_{l=1}^{\Tu} W_{l,1} \bigg) \\
 \le&~ e^{ L_u (\mu_{2|1}-\mu_{1|1} + \epsilon)} \, u^{(1+\epsilon)/\alpha_1} ~\le~ u^{\eta}
 \end{align*}
where we used the same calculations as the ones leading to \eqref{eq:bound313}.
Hence 
\begin{align*}
&~\P\Big( \Big\{ \prod_{l=1}^{\Tu}(b_2+c_2 M_l) X_{\Tu:\infty,2} > \frac12 u^{1/\alpha_2}\Big\} \cap C_u(\epsilon) \,\Big| \,\Tu< \infty\Big) \\
 ~\le&~ \P\big( X_{\Tu:\infty,2}  >  u^{1/\alpha_2-\eta}/2\big| \Tu<\infty \big)= \P\big( X_2  >   u^{1/\alpha_2-\eta}/2 \big)\,.
\end{align*}
since $X_{\Tu:\infty,2}$ is independent of $\{\Tu < \infty\}$. Since $1/\alpha_2 > \eta$, the last probability tends to zero. 

\medskip

Combining the two previous steps, we have proved that
$$\lim_{u\to \infty}\P \bigg( \Big\{X_2>u^{1/\alpha_2} \Big\} \cap  C_u(\epsilon)  \, \bigg| \, {\Tu} < \infty \Big)
= 0
$$ 
which by Lemma \ref{lem:eqcond} is enough to conclude \eqref{eq:condtu} and thus the desired result.
\end{proof}

\section{The diagonal SRE with equal coefficients}\label{sec:diageq}
In  this section we focus on the case where $b_i=b\ge 0$ and $c_i=c>0$ for any $1\le i\le d$ so that 
\[
\bb X_t=(b+cM_t) \bb X_{t-1}+\bb Q_t,\qquad t\in \Z.
\]
We can interpret the multiplicative factor $(b+cM_t)$ as multiplication with the random similarity matrix  $(b+cM_t) I_d$, thus we are in the framework of \cite{buraczewskietal}. 
%
%
%
From there, we obtain the following result:

\begin{thm}\label{thm:mvar equal components}
Assume \eqref{eq:A1}--\eqref{eq:A5} for all $1 \le i \le d$. Let $\bb X_0$ have the stationary distribution. Then $\bb X_0$ is VSRV and $({\bb X}_t)_{t \ge 0}$ is a VSRV process of order $\bb \alpha=(\alpha, \dots, \alpha)$, and its spectral tail process satisfies the relation
$$\widetilde{\bb  \Theta}_t=(b+cM_{t}) \widetilde{\bb \Theta}_{t-1}, \quad t \ge 1.$$
\end{thm}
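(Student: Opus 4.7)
The plan is to combine two results already developed in the paper. The starting observation is that when all diagonal entries of $\mathbb{M}_t$ equal the same scalar $b+cM_t$, equation \eqref{eq:A2} becomes the identical scalar equation $\E[(b+cM)^{\alpha_i}]=1$ for every coordinate, and its unique positive solution is a common value $\alpha_1=\cdots=\alpha_d=:\alpha$. Consequently $\|\bb x\|_{\bb \alpha} = \|\bb x\|^\alpha$, and being VSRV of order $(\alpha,\dots,\alpha)$ is exactly classical multivariate regular variation with tail index $\alpha$. In particular the ``VSRV claim'' for $\bb X_0$ reduces to the existence of the weak limit $\widetilde{\bb\Theta}_0$ of the angular part of $\bb X_0$ conditioned on $\|\bb X_0\|>x$.

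To produce this limit I would invoke Theorem 1.6 of \cite{buraczewskietal} for the SRE whose multiplier is the random similarity $\mathbb{M}_t = (b+cM_t)\,\bb I_d$. Assumptions \eqref{eq:A1}--\eqref{eq:A5} are precisely the scalar Kesten--Goldie hypotheses on $b+cM$, and the only multivariate feature needed beyond this is the similarity structure, which is trivial here; the application was already performed in \cite{pedersen:wintenberger:2018}. This yields multivariate regular variation of the stationary $\bb X_0$ and hence the spectral vector $\widetilde{\bb\Theta}_0$.

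With $\bb X_0$ known to be VSRV, I would then appeal directly to Proposition \ref{prop:SREVSRV} (cf.\ Remark \ref{rem:SREVSRV}) to propagate the statement to the entire Markov chain. The matrix $\mathbb{M}_t=(b+cM_t)\bb I_d$ is diagonal, and since all $\alpha_i$ agree the condition ``$M_{ij}=0$ a.s.\ whenever $\alpha_i>\alpha_j$'' holds vacuously. Proposition \ref{prop:SREVSRV} therefore applies and identifies $\bb\phi(\bb s,(\bb M,\bb Q)) = \mathbb{M}\bb s$, so $(\bb X_t)_{t\ge 0}$ is VSRV and its spectral tail process obeys
\[
\widetilde{\bb\Theta}_t \;=\; \mathbb{M}_t\,\widetilde{\bb\Theta}_{t-1} \;=\; (b+cM_t)\,\widetilde{\bb\Theta}_{t-1},\qquad t\ge 1,
\]
started from $\widetilde{\bb\Theta}_0$, which is the desired recursion.

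The only point requiring care is verifying the hypotheses of \cite{buraczewskietal}'s Theorem 1.6 in the present setting, but because the matrix is a scalar multiple of the identity the multivariate irreducibility/non-arithmeticity reductions collapse to the scalar conditions already encoded in \eqref{eq:A1}--\eqref{eq:A5}; no genuinely new estimate is needed, and the rest of the argument is a direct specialization of Proposition \ref{prop:SREVSRV}.
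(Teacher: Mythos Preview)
Your proposal is correct and follows essentially the same route as the paper: invoke Theorem~1.6 of \cite{buraczewskietal} for the similarity SRE to obtain (standard, hence vector-scaled) regular variation of $\bb X_0$, then apply Proposition~\ref{prop:SREVSRV} to propagate to the process and read off the recursion $\widetilde{\bb\Theta}_t=(b+cM_t)\widetilde{\bb\Theta}_{t-1}$. The only difference is cosmetic---you spell out why all $\alpha_i$ coincide and why the hypothesis of Proposition~\ref{prop:SREVSRV} is vacuous here, which the paper leaves implicit.
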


\begin{proof}
By \cite[Theorem 1.6]{buraczewskietal}, there is a non-null Radon measure $\mu$ on $[-\infty, \infty]^d \setminus \{0\}$ such that
$$  x^{\alpha} \P(x^{-1} \bb X_0 \in \cdot) ~\stackrel{v}{\to}~ \mu, \qquad x \to \infty.$$ [See \cite[Theorem 4.4.21]{Buraczewski.etal:2016} for a reformulation of the quoted result which is more consistent with our notation.] Hence, $\bb X_0$ is (standard) regularly varying and also VSRV of order $\bb \alpha=(\alpha, \dots, \alpha)$ since ${\bb\Theta}_0$ and $\widetilde{\bb \Theta}_0$ coincide then. 

The remaining assertions follow from a direct application of Proposition \ref{prop:SREVSRV}.
\end{proof}

%

In order to determine whether the components of $\bb X_0$ are asymptotically independent or dependent, we are interested in information about the support of $\P(\widetilde{\bb \Theta}_0 \in \cdot)$.
We write $\supp(\bb Q)$ for the support of the law of $\bb Q$ and $\mathrm{span}(E)$ for the linear space spanned by set $E \subset \R^d$. Let $S^{d-1}_\infty$ denote the unit sphere in $\R^d$ with respect to $\anorm{\cdot}$ which coincides with the unit sphere for the max-norm whatever is $\bb \alpha$. 

\begin{lem}\label{lem:support spectral measure}
	Under the assumptions of Theorem \ref{thm:mvar equal components},
	\begin{equation}  \label{eq:suppTheta} \mathrm{supp}( \widetilde{\bb \Theta}_0) ~\subset~\mathrm{span}\big(\supp(\bb Q)\big) \cap S^{d-1}_\infty.\end{equation}
In addition, the following implications hold:
\begin{enumerate}[(a)]
\item \label{lem52a} If $b=0$, $c>0$ and $\mathrm{supp}(M)$  is dense in $\R $, then
$$  \mathrm{supp}( \widetilde{\bb \Theta}_0) ~=~\mathrm{span}\big(\supp(\bb Q)\big) \cap S^{d-1}_\infty.$$
\item \label{lem52b} If  $b>0$, $c>0$ and $\mathrm{supp}(M)$  is dense in $\R_+$, then
$$ \mathrm{supp}( \widetilde{\bb \Theta}_0) ~=~ \{ a_1 \bb q_1 + \dots + a_n \bb q_n \, : \, n \in \N,  a_i > 0, \bb q_i \in \mathrm{supp}(\bb Q) \} \cap S^{d-1}_\infty,$$
{\em i.e.} it equals the convex cone generated by $\mathrm{supp}(\bb Q)$ intersected by the unit sphere.
\item If $\mathrm{supp}(\bb Q)$ is dense in $\R^d$,  then  $\mathrm{supp}(\widetilde{\bb \Theta}_0)   ~=~S^{d-1}_\infty.$
\end{enumerate}
\end{lem}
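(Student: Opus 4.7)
I would split the argument into the easy inclusion and the more delicate reverse inclusions.

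\emph{The inclusion \eqref{eq:suppTheta}} is immediate from the a.s.~convergent series $\bb X_0=\sum_{k\ge 1}R_k\bb Q_k$ with $R_k=\prod_{\ell=1}^{k-1}(b+cM_\ell)$: each summand lies in the linear subspace $V:=\mathrm{span}(\supp(\bb Q))$, which is closed in $\R^d$, so $\bb X_0\in V$ a.s. Since $V\cap S^{d-1}_\infty$ is closed, the portmanteau theorem applied to the conditional weak limit \eqref{eq:yt} defining $\widetilde{\bb\Theta}_0$ forces $\widetilde{\bb\Theta}_0\in V\cap S^{d-1}_\infty$ a.s. Under the hypotheses of (b), $b+cM>0$ a.s., so $R_k>0$ and $V$ sharpens to the closed convex cone generated by $\supp(\bb Q)$; in (c) the assumption gives $V=\R^d$ directly.

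\emph{For the reverse inclusion in (a)}, I would fix a target $\bb\theta=\lambda_0^{-1}\sum_{i=1}^n a_i\bb q_i\in V\cap S^{d-1}_\infty$ with $\bb q_i\in\supp(\bb Q)$, $a_i\in\R$, $\lambda_0>0$, and an arbitrary open neighborhood $U$ of $\bb\theta$ in $S^{d-1}_\infty$. By regular variation of $\|\bb X_0\|_{\bb\alpha}$, membership $\bb\theta\in\supp(\widetilde{\bb\Theta}_0)$ reduces to
$$\liminf_{x\to\infty} x\,\P\bigl(\|\bb X_0\|_{\bb\alpha}^{-1/\bb\alpha}\bb X_0\in U,\ \|\bb X_0\|_{\bb\alpha}>x\bigr)>0.$$
I would realize such a bound through the events $E_{k_0,x}$ on which: (i) $\bb Q_{k_0+i-1}$ lies in a prescribed small ball around $\bb q_i$ for $1\le i\le n$; (ii) each $M_{k_0+j-1}$ lies in a short interval (available by density of $\supp(M)$ in $\R$) chosen so that the ratio $R_{k_0+j}/R_{k_0+j-1}=b+cM_{k_0+j-1}$ has the correct sign and magnitude, yielding $R_{k_0+i}/R_{k_0}\approx a_{i+1}/a_1$ for $0\le i\le n-1$; (iii) the backward product $R_{k_0}$ first exceeds the threshold $\lambda_0|a_1|^{-1}x^{1/\alpha}$ exactly at epoch $k_0$; and (iv) the pre-$k_0$ partial sum and the post-$(k_0+n-1)$ tail are both $o(R_{k_0})$. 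On $E_{k_0,x}$ one has $\bb X_0\approx(R_{k_0}/\lambda_0)\bb\theta$, so $\|\bb X_0\|_{\bb\alpha}^{-1/\bb\alpha}\bb X_0\in U$ for sufficiently fine balls and intervals.

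\emph{The main obstacle} is the lower bound for $\sum_{k_0\ge 1}\P(E_{k_0,x})$. The local configuration (i)--(ii) at a fixed $k_0$ has a fixed positive probability $p_{\mathrm{loc}}>0$ independent of $k_0$, by independence of $(M_t,\bb Q_t)$ and the density/support hypotheses. The first-passage event (iii) for the random walk $S_k:=\log|R_k|$ at level $\sim\alpha^{-1}\log x$, summed over $k_0$, contributes an amount of order $x^{-1}$ by Kesten's exponential change of measure (the renewal mechanism underlying the Kesten--Goldie theorem). The negligibility (iv) is reduced to a strong-law control under the Esscher transform together with the moment bound \eqref{eq:A3}. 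Multiplying the three contributions and rescaling by $x$ yields the desired positive liminf; the detailed estimates are modelled on the lower-bound derivation of the tail constant in Section 4.4 of \cite{Buraczewski.etal:2016}, and this is the technical heart of the proof.

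\emph{Cases (b) and (c)} are minor variants of the construction. In (b) one imposes $a_i>0$ and invokes density of $\supp(M)$ in $\R_+$ to realize positive ratios, which is consistent with $R_k>0$; nothing else changes. For (c) the density $\supp(\bb Q)=\R^d$ permits $n=1$ with $\bb q_1$ directly proportional to $\bb\theta$, so the $M$-tuning in (ii) is empty and only the single-epoch level-crossing of $R_{k_0}$ is needed.
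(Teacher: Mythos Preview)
Your argument for the inclusion \eqref{eq:suppTheta} via the series representation and the portmanteau lemma is correct and matches the paper's reasoning. The gap lies in the reverse inclusions. Claim~(iv), that the pre-$k_0$ partial sum is $o(R_{k_0})$, cannot come from a strong-law argument under the Esscher transform: under $\P^{\alpha}$ the walk $\log|R_k|$ has positive drift, so $R_{k_0}^{-1}\sum_{k<k_0}R_k\bb Q_k$ converges \emph{in law} to a non-degenerate perpetuity, not to zero. Already the single term $R_{k_0-1}\bb Q_{k_0-1}/R_{k_0}=\bb Q_{k_0-1}/(b+cM_{k_0-1})$ is of order one with a direction unconstrained by your events (i)--(iii); the post-tail has the same defect when $b>0$, since $b+cM\ge b$ and the tail equals $R_{k_0+n}$ times a fresh copy of $\bb X_0$. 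Hence on $E_{k_0,x}$ one does \emph{not} have $\bb X_0\approx(R_{k_0}/\lambda_0)\bb\theta$, and imposing (iv) as a further restriction would require showing both that the limiting perpetuity puts mass near the origin and that this event decouples from the first-passage estimate --- neither is automatic, and the cited Section~4.4 of \cite{Buraczewski.etal:2016} does not provide directional control of this kind.

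The paper bypasses all probability estimates for the lower bound. It invokes \cite[Remark~1.9]{buraczewskietal}: for similarity SRE, the support of the spectral measure equals $\bigcap_{t>0}\supp(\sigma_t)$, i.e.\ precisely the set of directions in which $\supp(\bb X_0)$ is unbounded, \emph{regardless of how slowly mass decays there}. The task then reduces to describing $\supp(\bb X_0)$, which by \cite[Lemma~2.7]{buraczewskietal} is the closure of the fixed points $(1-m)^{-1}\bb q$ of contractive elements $(m,\bb q)$ in the affine semigroup generated by $\supp(b+cM)\times\supp(\bb Q)$. Density of $\supp(M)$ then allows one to exhibit, for every $t>0$ and every direction in the claimed set, such a fixed point of norm exceeding $t$; no renewal theory or change-of-measure argument is needed.
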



\begin{proof}[Proof of Lemma  \ref{lem:support spectral measure}]

The proof  is based on \cite[Remark 1.9]{buraczewskietal}, which gives that the support of the spectral measure $\sigma_\infty$ with respect to the Euclidean norm is given by the directions (subsets of the unit sphere $S^{d-1}$) in which the support of $\bb X_0$ is unbounded. More precisely, consider the measures 
$$ \sigma_t(A)~:=~ \P\Big(\|\bb X_0\|_2>t, \frac{\bb X_0}{\|\bb X_0\|_2} \in A \Big)$$
Then $\mathrm{supp}(\sigma_\infty) = \bigcap_{t >0} \mathrm{supp} (\sigma_t)$. The surprising part of this result is that all directions, in which the support of $\bb X_0$ is unbounded, do matter. One does not need a lower bound on the decay of mass at infinity. But if we know that the support of the spectral measure w.r.t.  the Euclidean norm is the intersection of a particular subspace with the unit sphere, we immediately deduce the same for the spectral measure w.r.t the max-norm, {\em i.e.,} for $\P( {\bb \Theta}_0 \in \cdot)$, as well as for $\P( \widetilde{\bb  \Theta} \in \cdot)$.

Thus, to proceed, we have to study the support of $\bb X_0$. For simplicity we  write, for the remainder of the proof, $(m, \bb q)$ for a realization of the random variables $(b+cM, \bb Q)$.
 We identify a pair $(m, \bb q)$ with the affine mapping $h(\bb x)=m\bb x+\bb q$, we say that $h \in \mathrm{supp}\big((b+cM,\bb Q)\big)$ if $(m,\bb q) \in \mathrm{supp} \big((b+cM, \bb Q)\big)$. We consider the semigroup generated by mappings in $\mathrm{supp}\big((b+cM, \bb Q)\big)$,
$$ \mathcal{G} ~:=~ \Big\{ h_1 \cdots h_n \, : \, h_i \in \mathrm{supp}\big((M, \bb Q)\big), \ 1 \le i \le n, \ n \ge 1 \Big\}.$$
Then, by \cite[Lemma 2.7]{buraczewskietal}
$$ \mathrm{supp}\big(\bb X_0\big) ~=~ \text{closure of } \Big\{ \tfrac{1}{1-m} \bb q \, : \, (m, \bb q) \in \mathcal{G}, \, |m|<1 \Big\}.$$
This is obviously a subset of $\mathrm{span}({\bb Q})$, hence \eqref{eq:suppTheta} holds.
[Again, see \cite[Proposition 4.3.1]{Buraczewski.etal:2016} for a reformulation of the quoted result which is more consistent with our notation.]

\medskip

Since $M$ and $\bb Q$ are independent, $\mathrm{supp}\big((b+cM,\bb Q)\big)=\mathrm{supp}(b+cM) \times \mathrm{supp}(\bb Q)$ and a general element in $\mathcal{G}$ is of the form
$$ h(\bb x)=m_1 \cdots m_n \bb x + \Big( \bb q_1 +\sum_{k=2}^n m_1 \cdots m_{k-1} \bb q_k \Big)$$
with $ m_i \in \mathrm{supp}(b+cM)$, $\bb q_i \in \mathrm{supp}(\bb Q)$. 
Thus, a generic point in $\mathrm{supp}(\bb X_0)$ is of the form
\begin{equation}\label{eq:generic element in supp S} \frac{1}{1-m_1 \cdots m_n} \Big( \bb q_1 +\sum_{k=2}^n m_1 \cdots m_{k-1}  \bb q_k \Big),\end{equation}
with 
$$ m_i \in \mathrm{supp}(b+cM), \ \bb q_i \in \mathrm{supp}(\bb Q), \ |m_1 \cdots m_n| <1.$$
The prefactor in \eqref{eq:generic element in supp S} is scalar, while the  bracket term represents a linear combination of $q_k \in \mathrm{supp}(\bb Q)$. Now we can prove the two implications.

Concerning (a), if $\mathrm{supp}(M)$ is dense in $\R$, then the bracket term in  \eqref{eq:generic element in supp S} can be chosen such that its direction approximates any direction of  $y \in \mathrm{span}\big(\mathrm{supp}(\bb Q)\big)$. Then, given $t>0$, $m_n$ can be chosen arbitrarily small, such that $|m_1 \dots m_n| <1$ and moreover, the norm of \eqref{eq:generic element in supp S} exceeds $t$. It follows that $\mathrm{supp}(\sigma_t) = \mathrm{span}\big(\mathrm{supp}(\bb Q)\big) \cap S^{d-1}$ for all $t$, which yields the assertion since $\mathrm{supp}(\sigma_\infty)=\bigcap_{t >0} \mathrm{supp}(\sigma_t)$.

Concerning (b), note that $m \in \mathrm{supp}(b+cM)$ is bounded from below by $b>0$, with $b<1$ due to assumption \eqref{eq:A1}. If $\mathrm{supp}(M)$ is dense in $\R_+$, then the bracket term in  \eqref{eq:generic element in supp S} can be chosen such that its direction approximates any direction of  $y \in \mathrm{span}\big(\mathrm{supp}(\bb Q)\big)_+$ given that its norm is suitably large. Similarly as for (a) above, the desired assertion follows. 

Concerning (c), if $\mathrm{supp}(\bb Q)$ is dense in $\R^d$, then the bracket term can be chosen such that it approximates an arbitrary element of $\R^d$ and its modulus is larger than $t$, while \eqref{eq:A1} entails that there are $m_i \in \mathrm{supp}(M)$ such that $|m_1 \dots m_n|<1$.
\end{proof}
%

%
%
%

\section{Diagonal SRE  and multivariate GARCH models}\label{sec:generalmodel}

\subsection{The diagonal SRE - the general case}\label{subsection61}
In this section we study the vector scaling regular variation properties of the diagonal SRE in full generality. We suppose that coordinates are chosen in such a way that   $\alpha_i$ decreases with $i$. We partition  $\{1, \dots, d\} = I_1  \cup I_2 \cup \dots \cup I_r$ such that $b_i=b_j$, $c_i=c_j$ and then $\alpha_i=\alpha_j$ if and only if $i,j \in I_\ell$ for some $1 \le \ell \le  r$.
We further denote by
$$ \R^{|I_\ell|} ~=~\{x\in \R^d; \ x_i=0\;\mbox{for}\; i\notin I_{\ell}\}$$
the (embedded) subspace corresponding with coordinates indexed by $I_{\ell}$ and by 
$$S_\infty^{|I_{\ell}|-1}~=~\{x\in \R^d; \max_{i \in I_{\ell}}|x_i|=1 \; \mbox{and}\; x_i=0\;\mbox{for}\; i\notin I_{\ell}\}$$
its max-norm-unit sphere. Note that if $I_{\ell}=\{i\}$ is a singleton, then $S_\infty^{|I_{\ell}|-1}=\{\bb e_i, - \bb e_i\}.$ We extend  \eqref{case1'} and  \eqref{case2'} so that for any $\ell\neq\ell'$ such that $\alpha_i>\alpha_j$ for $i\in I_\ell$ and $j\in I_{\ell'}$ we either have
\begin{align}
\tag{Case I'}   &b_i = b_j = 0, \qquad  c_j >c_i >0, \qquad & &\text{$M_t$ is $\R$-valued}  \\
\tag{Case II'}   &b_j \ge b_i >0, \qquad c_j > c_i >0, \qquad \frac{c_j}{c_i} \ge \frac{b_j}{b_i}, \qquad & &\text{$M_t >0$ a.s.}
\end{align}

\begin{thm} \label{thm:general case} Let $(\bb X_t)$ a stationary process satisfying the diagonal SRE \eqref{eq:SRE} and assume that \eqref{eq:A1}--\eqref{eq:A6} hold. Assume that \eqref{case1'} or  \eqref{case2'} holds,
then $(\bb X_t)$ is a VSRV process satisfying
\begin{equation} \label{eq:support Theta}
 \mathrm{Supp}(\widetilde{\bb \Theta}_0) \ \subset \ \cup_{1\le \ell \le r}S_\infty^{|I_\ell|-1}
\end{equation}
and 
\begin{equation}\label{eq:spectral process thm51}
\widetilde{\bb \Theta}_t = \mathbb{M}_t   \widetilde{\bb\Theta}_{t-1}\,,\qquad t\ge 1\,.
\end{equation}
\end{thm}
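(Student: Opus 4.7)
The strategy is to combine the two building blocks already established: Theorem \ref{thm:asymptotic independence}, which delivers bivariate asymptotic independence whenever two coordinates have distinct tail indices, and Theorem \ref{thm:mvar equal components}, which establishes VSRV for an equal-coefficients SRE on each block $I_\ell$. Once $\bb X_0$ is shown to be VSRV with spectral vector supported on $\cup_\ell S_\infty^{|I_\ell|-1}$, the recursion \eqref{eq:spectral process thm51} follows immediately from Remark \ref{rem:SREVSRV}, since $\mathbb{M}_t$ is diagonal.

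First I would extend the pairwise asymptotic independence across blocks. For any $i \in I_\ell$ and $j \in I_{\ell'}$ with $\ell \neq \ell'$, the bivariate subprocess $(X_{t,i}, X_{t,j})$ satisfies its own bivariate diagonal SRE driven by $(M_t)$ with coefficient pair $((b_i, c_i), (b_j, c_j))$ meeting the hypotheses of Theorem \ref{thm:asymptotic independence}. Hence
$$ \P(|X_{0,i}|^{\alpha_i} > x,\; |X_{0,j}|^{\alpha_j} > x) = o(x^{-1}), \qquad x \to \infty. $$
Within each block $I_\ell$ the sub-SRE has constant coefficients on the diagonal, so Theorem \ref{thm:mvar equal components} applies to $(\bb X_{t, I_\ell})$ and gives that the restricted subvector is VSRV with common tail index; in particular $\P(\max_{i \in I_\ell}|X_{0,i}|^{\alpha_i} > x) \sim C_\ell x^{-1}$ with $C_\ell > 0$, and the scaled subvector admits a nondegenerate weak limit on $S_\infty^{|I_\ell|-1}$.

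Combining the within- and across-block information via a union bound and a sandwich argument in the spirit of the proof of Proposition \ref{prop:nonst} yields $\P(\|\bb X_0\|_{\bb\alpha} > x) \sim \bigl(\sum_\ell C_\ell\bigr) x^{-1}$, so $\|\bb X_0\|_{\bb\alpha}$ is regularly varying of index $1$. Conditional on $\|\bb X_0\|_{\bb\alpha} > x$, the pairwise asymptotic independence forces, with probability tending to one, a unique block $I_\ell$ to contain the argmax; conditionally on this event, every off-block coordinate satisfies $|X_{0,j}|^{\alpha_j} / \|\bb X_0\|_{\bb\alpha} \to 0$ in probability, while the within-block subvector is governed by Theorem \ref{thm:mvar equal components}. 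Taking the mixture over $\ell$ identifies the spectral vector $\widetilde{\bb\Theta}_0$ and yields the support inclusion \eqref{eq:support Theta}. The extension to whole trajectories follows from Proposition \ref{prop:SREVSRV}: the VS condition is automatic for diagonal $\mathbb{M}_t$, whatever the $\alpha_i$'s, and Remark \ref{rem:SREVSRV} then delivers \eqref{eq:spectral process thm51}.

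The main obstacle is the joint-in-$t$ convergence: pairwise asymptotic independence plus within-block VSRV morally captures the spectral behavior at $t = 0$, but propagating it through the weak convergence of $\|\bb X_0\|_{\bb\alpha}^{-1/\bb\alpha}(\bb X_0, \ldots, \bb X_t)$ requires a careful tightness control ensuring off-block coordinates remain negligible throughout the SRE recursion, which is where the diagonal structure of $\mathbb{M}_t$ (hence absence of coupling between blocks) is essential.
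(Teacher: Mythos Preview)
Your proposal is correct and follows essentially the same route as the paper: reduce to VSRV of $\bb X_0$ via Proposition~\ref{prop:SREVSRV}/Remark~\ref{rem:SREVSRV}, use Theorem~\ref{thm:asymptotic independence} to kill cross-block joint tails (the paper's inclusion--exclusion is your union/sandwich), use Theorem~\ref{thm:mvar equal components} within each block, and identify the spectral law as a mixture of the block spectral measures $\xi_\ell$ padded by zeros. The only substantive difference is that the paper pins down the weak convergence of $\anorm{\bb X_0}^{-1/\bb\alpha}\bb X_0$ via a Portmanteau argument on closed sets, whereas you describe the conditioning-on-the-winning-block step informally; that step deserves the explicit $\epsilon$-argument the paper supplies.

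Your final paragraph overstates the difficulty: once $\bb X_0$ is VSRV there is no separate ``joint-in-$t$ tightness'' obstacle. The VS condition for diagonal $\mathbb{M}_t$ (which you correctly note is automatic) together with Theorem~3.4/Remark~\ref{rem:SREVSRV} \emph{is} the propagation mechanism, so \eqref{eq:spectral process thm51} follows with no further work.
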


\begin{proof}	

We start by proving that $(\bb X_t)$ is a VSRV process. According to Proposition \ref{prop:SREVSRV} and Remark \ref{rem:SREVSRV}, it suffices to prove that $\bb X_0$ is VSRV, then \eqref{eq:spectral process thm51} and the VSRV of $(\bb X_t)$ follow.

We use the following short-hand notation: For $\bb x \in \R^d$, let $\bb x_{\ell}=(x_i)_{i \in I_{\ell}}$, $\|\bb x\|_{\ell} := \max_{i \in I_{\ell}} |x_i| $  and  $\alpha(\ell)$ is the common tail index of all coordinates in $I_{\ell}$.	
	
Let $\epsilon >0$, $\ell \neq k$. By Eq. \eqref{eq:asympIndep} of Theorem \ref{thm:asymptotic independence}, it holds that
\begin{align}
&~\lim_{x \to \infty} x \cdot\P \Big( \| \bb X_0\|_\ell > \epsilon \anorm{\bb X_0}^{1/\alpha(\ell)}, \ \| \bb X_0\|_k > \epsilon \anorm{\bb X_0}^{1/\alpha(k)}, \ \anorm{\bb X_0} >x\Big) \nonumber \\
\le&~\lim_{x \to \infty} x \cdot\P \Big( \| \bb X_0\|_\ell > \epsilon x^{1/\alpha(\ell)}, \ \| \bb X_0\|_k > \epsilon x^{1/\alpha(k)}\Big) \nonumber \\
\le&~  \sum_{i \in I_{\ell},\, j \in I_k} \lim_{x \to \infty} x \cdot \P\Big( |X_{0,i}| > \epsilon x^{1/\alpha_i}, \ |X_{0,j}| > \epsilon x^{1/\alpha_j}\Big) ~=~ 0 \label{eq:products vanish}
\end{align}	
We note from the results of Section \ref{sec:diageq}  that there are positive constants $c_{\ell}$ and probability measures $\xi_{\ell}$ on the $|I_\ell|$-dimensional unit sphere (w.r.t. the max-norm), such that for all $1 \le \ell \le r$
\begin{equation*}
\lim_{x \to \infty} x \cdot \P\Big( \|\bb X_0\|_{\ell} > x^{1/\alpha(\ell)} , \ \|\bb X_0\|_{\ell}^{-1} \bb X_{0, \ell} \in \cdot \Big) ~=~ c_{\ell}\, \xi_\ell(\cdot) . 
\end{equation*} 
Applying the inclusion-exclusion principle, we have
\begin{align}
&~ \lim_{x \to \infty} x \cdot \P\big( \anorm{\bb X_0}>x\big) ~=~ \lim_{x \to \infty} x \cdot \P \Big( \bigvee_{1 \le \ell \le r} \|\bb X_0\|_{\ell} > x^{1/\alpha(\ell)}\Big) \nonumber \\
=&~ \sum_{1 \le \ell \le r} \lim_{x \to \infty} x \cdot \P \Big(  \|\bb X_0\|_{\ell}  > x^{1/\alpha(\ell)}\Big) \nonumber 
\\
&~\quad - \sum_{1 \le \ell < k \le r} \lim_{x \to \infty} x \cdot \P\Big(  \|\bb X_0\|_{\ell}  > x^{1/\alpha(\ell)}, \  \|\bb X_0\|_{k}  > x^{1/\alpha(k)}\Big) + \dots \nonumber \\
=&~c_1 + \dots + c_r ~=:~c, \label{eq:limitxoregvar}
\end{align}
since all intersection terms vanish asymptotically due to Eq. \ref{eq:products vanish} (with $\epsilon=1$).

Thus we have shown that $\anorm{\bb X_0}$ is regularly varying.
We claim that 
\begin{equation*}
\lim_{x \to \infty} \P \Big( \anorm{\bb X_0}^{-1/\bb \alpha} \bb X_0 \in \cdot \, \Big| \, \anorm{\bb X_0} > x \Big) ~=~ \frac{1}{c}\sum_{1 \le \ell \le r} c_\ell \, \widetilde{\xi}_{\ell}(\cdot),
\end{equation*}
where 
$$ \widetilde{\xi}_{\ell} ~=~ \delta_{\bb 0_1} \otimes \dots \otimes \delta_{\bb 0_{\ell -1}} \otimes \xi_\ell \otimes \delta_{\bb 0_{\ell+1}} \otimes \dots \otimes \delta_{\bb 0_r}$$
is the extension of $\xi_{\ell}$ to a measure on the unit sphere $S^{d-1}_\infty$ in $\R^d$  by putting unit mass in the origin of the additional coordinates. Hence, its support is contained in $S_\infty^{|I_\ell|-1}$. In particular, \eqref{eq:support Theta} follows once this claim is proved.

By the Portmanteau lemma, it suffices to study closed sets. Note that for any closed set $B \subset S^{d-1}_\infty$, it holds that
$$  B_{\ell, \epsilon} ~:=~\{ \bb x_\ell \, : \,  \bb x \in B, \, |x_j|<\epsilon \text{ for } j \notin I_\ell \}~\to~ \{ \bb x_\ell \, : \,  \bb x \in B \cap S^{|I_\ell|-1} \} ~=:~ B_\ell$$
as $\epsilon \to 0$. Using \eqref{eq:products vanish} and the inclusion-exclusion principle, we obtain
\begin{align*}
&~\limsup_{x \to \infty}  x \cdot \P\Big(\anorm{\bb X_0}^{-1/\bb \alpha} \bb X_0 \in B, \, \anorm{\bb X_0} > x \Big) \\
~=&~ \limsup_{x \to \infty}  x \cdot \P\Big(\anorm{\bb X_0}^{-1/\bb \alpha} \bb X_0 \in B, \,  \bigvee_{1 \le k \le r} \|\bb X_0\|_{k} > x^{1/\alpha(k)} \Big) \\
~=&~ \sum_{1 \le \ell \le r} \limsup_{x \to \infty}  x \cdot \P\Big(\anorm{\bb X_0}^{-1/\bb \alpha} \bb X_0 \in B, \|\bb X_0\|_\ell > x^{1/\alpha(\ell)}, \,
\\ & \hspace*{5.5cm}  \bigwedge_{k \neq \ell} \| \bb X_0\|_k \le \epsilon \anorm{\bb X_0}^{1/\alpha(k)}\Big) \\
~\le&~ \sum_{1 \le \ell \le r} \lim_{x \to \infty}  x \cdot \P\Big(\|\bb X_0\|_\ell^{-1} \bb X_{0,\ell} \in B_{\ell,\epsilon},\, \|\bb X_0\|_\ell > x^{1/\alpha(\ell)}, \, \\
& \hspace*{6cm} \bigwedge_{k \neq \ell} \| \bb X_0\|_k \le \epsilon x^{1/\alpha(k)} \Big) \\
~=&~ \sum_{1 \le \ell \le r} \lim_{x \to \infty}  x \cdot \P\Big(\|\bb X_0\|_\ell^{-1} \bb X_{0,\ell} \in B_{\ell,\epsilon}, \, \|\bb X_0\|_\ell > x^{1/\alpha(\ell)} \Big) \\
~=&~ \sum_{1 \le \ell \le r} c_l\, \xi_\ell({B_\ell,\epsilon})
\end{align*}
This holds for all $\epsilon >0$. Since the sequence $B_{\ell,\epsilon}$ is decreasing, we conclude by the continuity of $\xi_\ell$ that
\begin{align*} \limsup_{x \to \infty}   x \cdot \P\Big(\anorm{\bb X_0}^{-1/\bb \alpha} \bb X_0 \in B, \, \anorm{\bb X_0} > x \Big) ~\le&~ \sum_{1 \le \ell \le r} c_\ell \, \xi_{\ell} (B_\ell) \\~=&~ \sum_{1 \le \ell \le r} c_\ell \widetilde{\xi}_\ell(B). \end{align*}
Combined with \eqref{eq:limitxoregvar}, this proves the weak convergence by an application of the Portmanteau lemma.
\end{proof}

\subsection{Diagonal BEKK-ARCH($1$) model}
We consider $(\bb X_t)$ the solution of the diagonal BEKK-ARCH(1) model defined as in \cite{pedersen:wintenberger:2018} by the system
\[
\begin{cases}\bb X_t&=\bb H_t^{1/2}\bb Z_t,\qquad t\in \Z,\\
\bb H_t&= \bb \Sigma+{\rm Diag}(c_1,\ldots,c_d)\bb X_{t-1}\bb X_{t-1}^\top{\rm Diag}(c_1,\ldots,c_d)\,,
\end{cases}
\]
where $(\bb Z_t)$ is an iid sequence of Gaussian random vectors $\mathcal N_d(0, I)$ and $ \bb \Sigma$ is a variance matrix.  Then $\bb X_t$ satisfies the diagonal SRE model \eqref{eq:SRE} with  $(M_t)$  iid $\mathcal{N}(0,1)$, $b_i=0$ for all $1\le i\le d$ and $(\bb Q_t)$ are iid  $\mathcal{N}(0,\bb{\Sigma})$. Under the top-Lyapunov condition
\begin{equation}\label{eq:stationarityassumption}
c_i^2<2 e^\gamma, \qquad 1\le i\le d,
\end{equation}
 where $\gamma \approx 0.5772$ is the Euler constant, it exists a stationary solution $(\bb X_t)$;  see e.g. \cite{Nelson1990StationarityGARCH}.  Its multivariate extremal behaviour is given by the following corollary
which follows from an application Theorem \ref{thm:general case} in \eqref{case1'}:

\begin{cor}\label{cor:resultBEKK}
If the stationarity assumption \eqref{eq:stationarityassumption} is satisfied, then the stationary solution $(\bb X_t)$ of the diagonal BEKK-ARCH(1) model is a VSRV process satisfying
\begin{equation*} 
\mathrm{Supp}(\widetilde{\bb \Theta}_0) \ = \ \cup_{1\le \ell \le r}S_\infty^{|I_\ell|-1}
\end{equation*}
and 
\begin{equation}
\widetilde{\bb \Theta}_t = M_t \, \diag(c_1,\ldots, c_d) \widetilde{\bb\Theta}_{t-1}\,,\qquad t\ge 1\,.
\end{equation} 
\end{cor}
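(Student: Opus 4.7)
The plan is to obtain the corollary as a direct specialization of Theorem \ref{thm:general case} in Case I', supplemented by a blockwise application of Lemma \ref{lem:support spectral measure}(a) to upgrade the support inclusion to equality. First I would recast the diagonal BEKK-ARCH(1) system as the diagonal SRE \eqref{eq:SRE} with $b_i=0$, $M_t\sim \mathcal N(0,1)$ and $\bb Q_t\sim \mathcal N(0,\bb \Sigma)$. The condition \eqref{eq:stationarityassumption} is exactly the one delivering \eqref{eq:A1}, since $\E[\log|c_i M|]=\log c_i -(\gamma+\log 2)/2<0$ is equivalent to $c_i^2<2e^\gamma$; the same condition yields, by convexity of $\alpha\mapsto\E[|c_iM|^{\alpha}]$, a unique $\alpha_i>0$ solving $\E[|c_iM|^{\alpha_i}]=1$, giving \eqref{eq:A2}. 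Gaussian moments of all orders give \eqref{eq:A3}; the density of $\log|M|$ gives \eqref{eq:A4}; the non-degeneracy of $\bb Q$ gives \eqref{eq:A5}; and the Cauchy-type tail of the ratio of two jointly Gaussian coordinates gives $\P(|Q_j|/|Q_i|>u^\varepsilon)=O(u^{-\varepsilon})$, so $\log(u)\,\P(|Q_j|/|Q_i|>u^\varepsilon)\to 0$ and \eqref{eq:A6} holds.

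Second, because $b_i=0$ for every $i$ and for any pair of distinct blocks $I_\ell,I_{\ell'}$ with $\alpha_i>\alpha_j$ for $i\in I_\ell$, $j\in I_{\ell'}$ one has $c_j>c_i>0$, the interblock hypothesis of Case I' is satisfied across every pair of blocks. Theorem \ref{thm:general case} then applies and yields that $(\bb X_t)$ is VSRV, provides the recursion $\widetilde{\bb \Theta}_t=\mathbb M_t \widetilde{\bb \Theta}_{t-1}$, and gives the inclusion $\mathrm{Supp}(\widetilde{\bb \Theta}_0)\subset \cup_{\ell} S_\infty^{|I_\ell|-1}$. In the BEKK setting $\mathbb M_t=\diag(c_1M_t,\ldots,c_d M_t)=M_t\,\diag(c_1,\ldots,c_d)$, which is exactly the recursion asserted in the corollary.

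Third, to promote the inclusion to equality I would exploit the mixture identity $\mathrm{law}(\widetilde{\bb \Theta}_0)=c^{-1}\sum_\ell c_\ell\,\widetilde{\xi}_\ell$ derived inside the proof of Theorem \ref{thm:general case}, which forces $\mathrm{Supp}(\widetilde{\bb \Theta}_0)=\bigcup_\ell \mathrm{Supp}(\widetilde{\xi}_\ell)$. Each $\widetilde{\xi}_\ell$ is the extension to $S_\infty^{d-1}$ of the block spectral measure $\xi_\ell$ produced by the equal-coefficient SRE on $\R^{|I_\ell|}$ driven by $M$ and the subvector $\bb Q_\ell$. Assuming (as is standard for a variance matrix) that the within-block covariance $\bb \Sigma_{I_\ell,I_\ell}$ is non-degenerate, $\supp(\bb Q_\ell)$ spans $\R^{|I_\ell|}$; combined with $b=0$, $c=c_i>0$ for $i\in I_\ell$ and $\supp(M)=\R$, Lemma \ref{lem:support spectral measure}(a) yields $\mathrm{Supp}(\xi_\ell)=S_\infty^{|I_\ell|-1}$. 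Taking the union gives the claimed equality.

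The main obstacle I anticipate is the bookkeeping of this reverse-inclusion step: one has to recognize $\xi_\ell$ as the spectral measure of a genuine single-coefficient SRE on the block subspace, so that Theorem \ref{thm:mvar equal components} and Lemma \ref{lem:support spectral measure}(a) are directly applicable, and then note that density of $\supp(\bb Q_\ell)$ in $\R^{|I_\ell|}$ requires full rank of the within-block submatrix of $\bb \Sigma$. All other assumption verifications and the invocation of Theorem \ref{thm:general case} are routine in view of the Gaussian structure of the BEKK model.
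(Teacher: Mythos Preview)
Your proposal is correct and follows essentially the same route as the paper: verify \eqref{eq:A1}--\eqref{eq:A6} using the Gaussian structure (in particular the Cauchy tail of the ratio $Q_j/Q_i$ for \eqref{eq:A6}), apply Theorem~\ref{thm:general case} in Case~I' to obtain VSRV, the recursion, and the support inclusion, and then upgrade to equality blockwise via Lemma~\ref{lem:support spectral measure}\eqref{lem52a} using that the within-block Gaussian subvector has full support. The paper handles the singleton blocks separately via Goldie--Kesten, but your uniform invocation of Lemma~\ref{lem:support spectral measure}\eqref{lem52a} covers that case as well.
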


See the first paragraph in Subsection \ref{subsection61} for the definition of $I_{\ell}$; here $c_i=c_j$ if and only if there exists an $\ell$ such that $i,\,j\in I_\ell$.
\begin{proof}
We have to check the assumptions of the previous theorem. This is readily done for \eqref{eq:A1}-\eqref{eq:A5}, see \cite{pedersen:wintenberger:2018} for details. Considering \eqref{eq:A6}, let $\sigma_i^2=\mathrm{Var}(Q_i)$ and $\rho_{ij}$ be the correlation coefficient of $Q_i$ and $Q_j$; $\E Q_i = \E Q_j=0$. Then the ratio $Q_i/Q_j$ has a Cauchy distribution with location parameter $a=\rho_{ij} \tfrac{\sigma_i}{\sigma_j}$ and scale parameter $b=\tfrac{\sigma_i}{\sigma_j} \sqrt{1-\rho_{ij}^2}$; see e.g. \cite[Eq. (3.3)]{Curtiss:1941}. The Cauchy distributions are 1-stable, hence
$$ \P\Big( \frac{|Q_i|}{|Q_j|} > u \Big) = O(u)$$
and \eqref{eq:A6} follows if $I$, $J$ are singletons. To compare $Q_{I}^*=\max_{i \in I} |Q_{i}|$ with $Q_{J}^*=\max_{j \in J} |Q_j|$
we use the simple bound (fix any $j \in J$)
$$ \Big\{ \frac{Q_{I}^*}{Q_{J}^*} > u  \Big\} ~\subset~ \bigcup_{i \in I} \Big\{ \frac{|Q_i|}{|Q_k|} >u \Big\}  $$
to conclude that the probability of this event still decays as $O(u)$. Thus \eqref{eq:A6} also holds in this case.
%
%

It remains to show that $\mathrm{supp}\big(\widetilde{\bb \Theta}_0\big)$ is {\em equal} to $\cup_{1\le \ell \le r}S^{|I_\ell|-1}$. Therefore, we can focus on a particular block $I$ and show that the spectral measure of the restriction $(X_{0,i})_{i\in I}$ has full support $S^{|I|-1}$. 

If $I$ is a singleton, then this means nothing but that left and right tails are regularly varying with the same index; which already follows from the Goldie-Kesten theorem, see \eqref{eq:tails}. 
If $|I|>1$ then we are in the setting of Section \ref{sec:diageq}. The result follows from Lemma \ref{lem:support spectral measure} \eqref{lem52a}, since $M$ and $(Q_i)_{i \in I}$ are independent Gaussians, and $\mathrm{span}\big(\mathrm{supp}((Q_i)_{i \in I})\big)=\R^{|I|}$ since $C$, the variance of $\bb Q$, has full rank.
\end{proof}

The multivariate regular variation properties of the diagonal BEKK-ARCH(1) process is quite simple as the support is preserved by the multiplicative form of the tail process: The tail process is a mixture of multiplicative random walks with distinct supports. Each support corresponds to the span of the diagonal coefficients of the multiplicative matrix that are equal. From a risk analysis point of view, it means that the extremal risks are dependent and of similar intensity  only in the directions of equal diagonal coefficients. Our multivariate analysis appeals for an extreme financial risk analysis based on the estimation of the diagonal coefficients of the BEKK-ARCH(1) process accompanied with a test of their equality. However by asymptotic independence the mixture is asynchronous and the model is relevant only if the each group of dependent extremal risks are due to different financial crisis.

\medskip

Under more restrictive assumptions, it is also possible to provide second-order results, see the arXiv version of the present paper \cite{Mentemeier.Wintenberger:2019}. See also \cite{matsui2019characterization} for a recent analysis of the general BEKK-ARCH model.

\subsection{CCC-GARCH($1,1$) model}

The Constant Conditional Correlation CCC-GARCH(1,1) model has been introduced by \cite{Bollerslev1990} such as the stationary solution of  the system 
\begin{equation}\label{eq:ccc}
\begin{cases}
\bb R_t&=\bb \Sigma_t N_t\,,\qquad t\in \Z\,,\\
\bb \Sigma_{t}&=\text{Diag}(\sigma_{t,1}^2,\ldots,\sigma_{t,d}^2)\,, \\
\sigma_{t,i}^2& = a_i+b_{i}\sigma_{t-1,i}^2+c_{i}R_{t-1,i}^2 \,,
\end{cases}
\end{equation}
where $N_t$  iid is distributed as $\mathcal N_d(0, \bb \Sigma)$ for $\bb \Sigma$ a correlation matrix and the coefficients  $a_i$s, $b_i$s and $c_i$s are positive.  The general CCC-GARCH(1,1) model of \cite{starica:1999} is defined by the same system with extra cross terms $\sum_{j\neq i}b_{i,j}\sigma_{t-1,j}^2$ and $\sum_{j\neq i}c_{i,j}R_{t-1,j}^2$ in the second equation. By a direct  application of Theorem of \cite{Kesten1973}, the volatility $\bb X_t=(\sigma_{t,1}^2,\ldots,\sigma_{t,d}^2)^\top\in\R^d_+$ of a  general CCC-GARCH(1,1) is regularly varying when $b_{i,j}+c_{i,j}>0$ for all $i,j$, see  \cite{starica:1999} for more details.

Back to the original CCC-GARCH model \eqref{eq:ccc} we consider the degenerate case $N_t=(1,\ldots,1)^\top Z_t$ with $Z_t\sim \mathcal N(0,1)$. The existence of a stationary solution is ensured under the top-Lyapunov conditions
\begin{equation}\label{eq:statccc}
\E[\log(b_i+c_iZ_0^2)]<0\,,\qquad 1\le i\le d\,.
\end{equation}
Then the original CCC-GARCH(1,1) volatility $\bb X_t=(\sigma_{t,i}^2)_{1\le i\le d}$ satisfies the diagonal SRE \eqref{eq:SRE} with $\bb Q_t\equiv \bb q =(a_1,\ldots,a_d)^\top$ and 
$$
\mathbb{M}_t=\mathrm{Diag}(b_1+c_1 Z^2_t, \dots, b_d+c_d Z^2_t)\,.
$$

We get  from another application Theorem \ref{thm:general case} in \eqref{case2'}:

\begin{cor}\label{cor:resultCCC}
If the stationarity assumption \eqref{eq:statccc} is satisfied and $c_j/c_i\ge b_j/b_i$ whenever $\alpha_i>\alpha_j$, then the volatility $(\bb X_t)$ of the stationary solution of the CCC-GARCH model \eqref{eq:ccc} with $N_t=(1,\ldots,1)^\top Z_t$ is a VSRV process satisfying
\begin{equation*} 
\mathrm{Supp}(\widetilde{\bb \Theta}_0) \ = \ \cup_{1\le \ell \le r} \{{\bb q}_\ell\}
\end{equation*}
where $\bb q_{\ell}$ is given by 
$$ (\bb q_{\ell})_i ~=~ \begin{cases}
0 & i \notin I_{\ell}, \\
\bb q_i / \bb q_{\ell}^* & i \in I_{\ell} 
\end{cases} $$
with $\bb q_{\ell}^* := \max_{j \in I_{\ell}} \bb q_j$.
Moreover 
\begin{equation*}
\widetilde{\bb \Theta}_t = \diag(b_1+c_1 Z_t^2,\ldots,b_d+ c_d Z_t^2) \widetilde{\bb\Theta}_{t-1}\,,\qquad t\ge 1\,.
\end{equation*} 
\end{cor}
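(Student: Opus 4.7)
The plan is to apply Theorem \ref{thm:general case} to the diagonal SRE representation of the CCC-GARCH volatility, and then identify the support of $\widetilde{\bb \Theta}_0$ block-by-block using Lemma \ref{lem:support spectral measure}(b).

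First I would verify the hypotheses of Theorem \ref{thm:general case}. The volatility $\bb X_t$ satisfies the diagonal SRE with $M_t=Z_t^2$ and $\bb Q_t\equiv\bb q=(a_1,\dots,a_d)^\top$. Assumption \eqref{eq:A1} is exactly the stationarity condition \eqref{eq:statccc}. Since $Z_t^2$ has a smooth positive density on $(0,\infty)$ and all polynomial moments are finite, \eqref{eq:A3} and \eqref{eq:A4} are immediate. Non-degeneracy \eqref{eq:A5} follows from $a_i>0$, and \eqref{eq:A6} is trivial because $|Q_j|/|Q_i|=a_j/a_i$ is a deterministic constant. The structural hypothesis \eqref{case2'} holds on every pair of distinct blocks by positivity of all coefficients together with the standing assumption $c_j/c_i\ge b_j/b_i$ whenever $\alpha_i>\alpha_j$. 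Theorem \ref{thm:general case} then gives that $(\bb X_t)$ is a VSRV process with spectral recursion $\widetilde{\bb \Theta}_t=\mathbb{M}_t\widetilde{\bb \Theta}_{t-1}$, which with $M_t=Z_t^2$ is exactly the recursion stated in the corollary.

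Second, I would refine the inclusion $\mathrm{Supp}(\widetilde{\bb \Theta}_0)\subset\cup_{1\le\ell\le r}S_\infty^{|I_\ell|-1}$ supplied by Theorem \ref{thm:general case} by analysing each block separately. Within block $I_\ell$, the restricted multiplicative noise is the common scalar $b_{(\ell)}+c_{(\ell)}Z_t^2$ and the restricted additive input is the deterministic positive vector $(a_i)_{i\in I_\ell}$; this is exactly the equal-coefficient framework of Section \ref{sec:diageq}, with assumptions \eqref{eq:A1}--\eqref{eq:A5} for the block inherited from the previous step. Because $b_{(\ell)},c_{(\ell)}>0$ and $\mathrm{supp}(Z^2)=[0,\infty)$ is dense in $\R_+$, Lemma \ref{lem:support spectral measure}(b) applies and yields the block-level spectral support as the convex cone generated by $\{(a_i)_{i\in I_\ell}\}$ intersected with $S_\infty^{|I_\ell|-1}$. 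Since $\bb Q|_{I_\ell}$ is a single positive vector, that cone is the half-line $\{\lambda (a_i)_{i\in I_\ell}:\lambda>0\}$, which meets the max-norm unit sphere in the unique point $\bb q_\ell=(a_i/\bb q_\ell^*)_{i\in I_\ell}$ with $\bb q_\ell^*=\max_{j\in I_\ell}a_j$, precisely as stated. Taking the union over $\ell$ gives $\mathrm{Supp}(\widetilde{\bb \Theta}_0)=\cup_{1\le\ell\le r}\{\bb q_\ell\}$.

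I do not anticipate a substantive obstacle: each individual step reduces to invoking a previously established result (Theorem \ref{thm:general case} for the VSRV property and spectral recursion, Lemma \ref{lem:support spectral measure}(b) for the block-level support). The only bookkeeping care needed is to confirm that the restricted block matches the hypotheses of Lemma \ref{lem:support spectral measure}(b) rather than (a) or (c), since $\bb Q$ here is a point mass; positivity of $b_{(\ell)}$ and non-negativity of $Z^2$ place us squarely in case (b), and the degenerate support of $\bb Q$ merely collapses the convex cone to a single half-line, which is what produces the concentrated, atomic spectral support.
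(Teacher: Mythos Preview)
Your proposal is correct and follows essentially the same route as the paper: verify \eqref{eq:A1}--\eqref{eq:A6} for the diagonal SRE with $M_t=Z_t^2$ and deterministic $\bb Q_t\equiv\bb q$, invoke Theorem~\ref{thm:general case} in \eqref{case2'} to obtain the VSRV property and the spectral recursion, and then apply Lemma~\ref{lem:support spectral measure}\eqref{lem52b} block-by-block to pin down the support as the single normalized direction $\bb q_\ell$. The only point the paper makes slightly more explicit is the existence of $\alpha_i$ in \eqref{eq:A2}, which follows from convexity of $s\mapsto\E[(b_i+c_iZ_0^2)^s]$ together with \eqref{eq:statccc} and the fact that all moments of $Z_0^2$ are finite with the $s$-th moment tending to infinity.
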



\begin{proof}
	To apply Theorem \ref{thm:general case} in \eqref{case2'}, we have to check conditions \eqref{eq:A1}--\eqref{eq:A6} for $1 \le i \le d$. The stationarity assumption \eqref{eq:statccc} is \eqref{eq:A1} and implies moreover, together with the fact that $M=Z_0^2$ has a $\chi^2$-distribution, that all moments of $b_i+c_iM$ exist and $\lim_{s \to \infty} \E\big[ |b_i+c_iM|^s\big]=\infty$, while $\E\big[ |b_i+c_iM|^\epsilon\big]<1$ for sufficiently small $\epsilon >0$. Hence, for any $1\le i\le d$ there exists $\alpha_i>0$ satisfying
	$$
	\E[(b_i+c_iZ_0^2)^{\alpha_i}]=1\,.
	$$
	The further assumptions are readily checked using that $b_i+c_iM$ has a density on $[b,\infty)$ and that $\bb Q_t \equiv \bb q$ is deterministic. We conclude that \begin{equation*}
	\mathrm{supp}(\widetilde{\bb \Theta}_0) \ \subset \ \cup_{1\le \ell \le r}S_\infty^{|I_\ell|-1}\end{equation*}
	
	Finally, by Lemma \ref{lem:support spectral measure} \eqref{lem52b}, for each $I_{\ell}$-block,
	\begin{equation*}  \mathrm{supp}( \widetilde{\bb \Theta}_0) \cap S_\infty^{|I_\ell|-1}  ~=~ \{ {\bb q}_{\ell}\}.\end{equation*}
\end{proof}

Thus, despite the constant correlation matrix $\bb \Sigma$ being totally correlated, the volatility process exhibits asymptotic independence. On the other hand, if the components of the CCC-GARCH(1,1) model are independent, which is the case when $\bb \Sigma = \diag(1, \dots, 1)$, they are asymptotically independent as well.

  We conjecture asymptotic independence to be true for all choices of $N_t$ in the original CCC-GARCH(1,1) model, since the one we considered is the most dependent one.

\subsection{A simulation study}
We provide some empirical evidences on the results of Theorem \ref{thm:asymptotic independence} and Lemma \ref{lem:support spectral measure}. We simulate a trajectory of length $10^8$ of the bidimensional diagonal SRE
\begin{equation*}
\bb X_t ~=~  \vect{X_{t,1}}{X_{t,2}} ~=~ \left(\begin{matrix}b_1+c_1 M_t & 0\\0& b_2+c_2 M_t\end{matrix}\right) \vect{X_{t-1,1}}{X_{t-1,2}}  + \bb Q_t.
\end{equation*}
 We fix the threshold $x$ as the empirical ($1-10^{-5}$)-percentile of the simulated values $\big(\|\bb x_t\|_{\bb \alpha}\big)$. 

Following \eqref{eq:yt}, we consider the ratios $\big(\|\bb x\|_\alpha^{-1/\bb \alpha}\bb x_t\big)$ of the exceedances satisfying $\|\bb x_t\|_{\bb \alpha}>x$ as an approximation of the spectral component $\widetilde{ \bb \Theta}_0$. We estimate empirically the angular measure $\arctan(\tilde \Theta_{0,1}/\tilde \Theta_{0,2})$ by an histogram. 

\begin{figure}[h!]
\begin{minipage}[b]{.5\linewidth}
\centering \includegraphics[height=0.25\textheight]{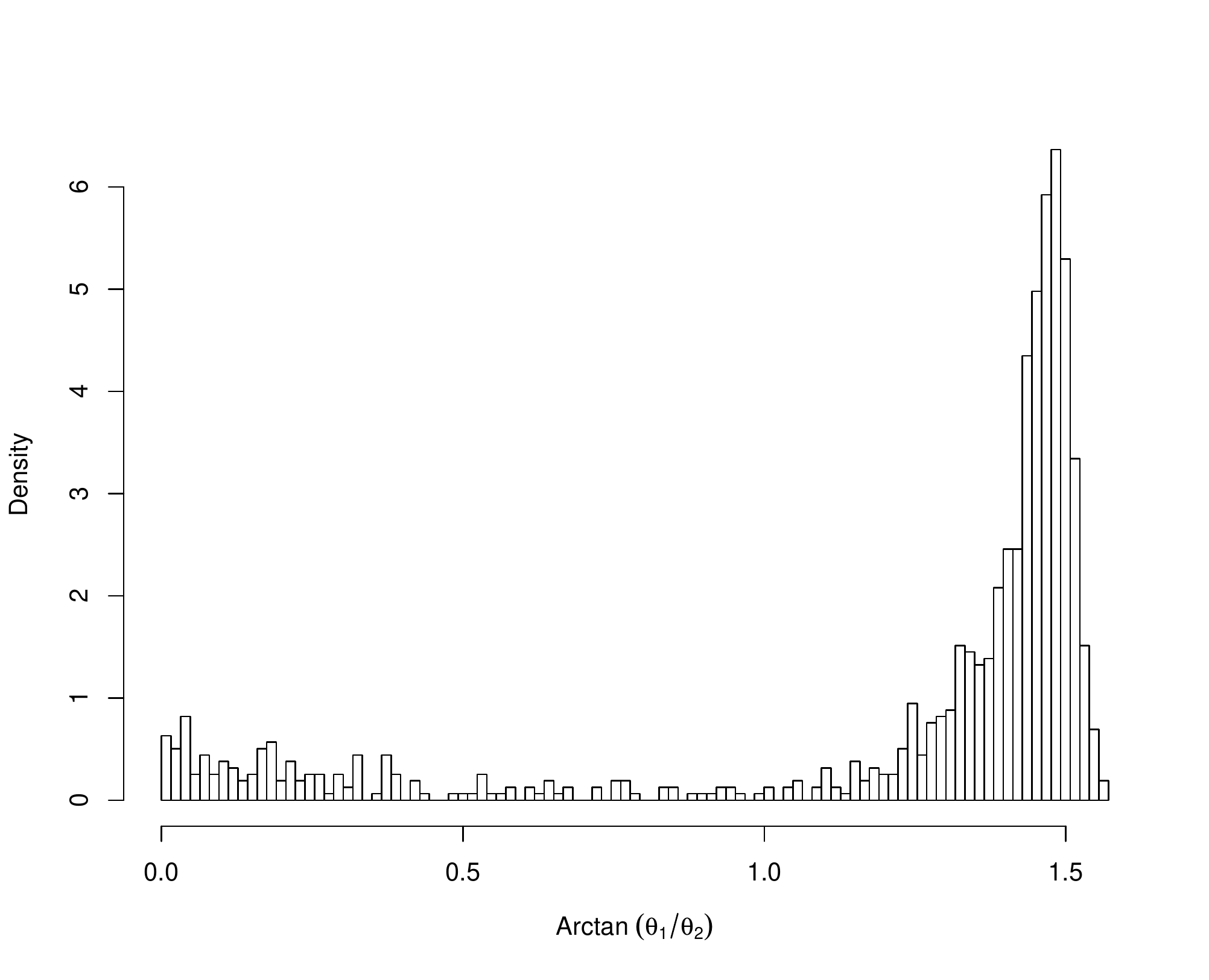}
\caption{\footnotesize  $\alpha_1=2$ and $\alpha_2=4$}
\label{fig:c1different}
\end{minipage} \hfill
\begin{minipage}[b]{.5\linewidth}
\centering    \includegraphics[height=0.25\textheight]{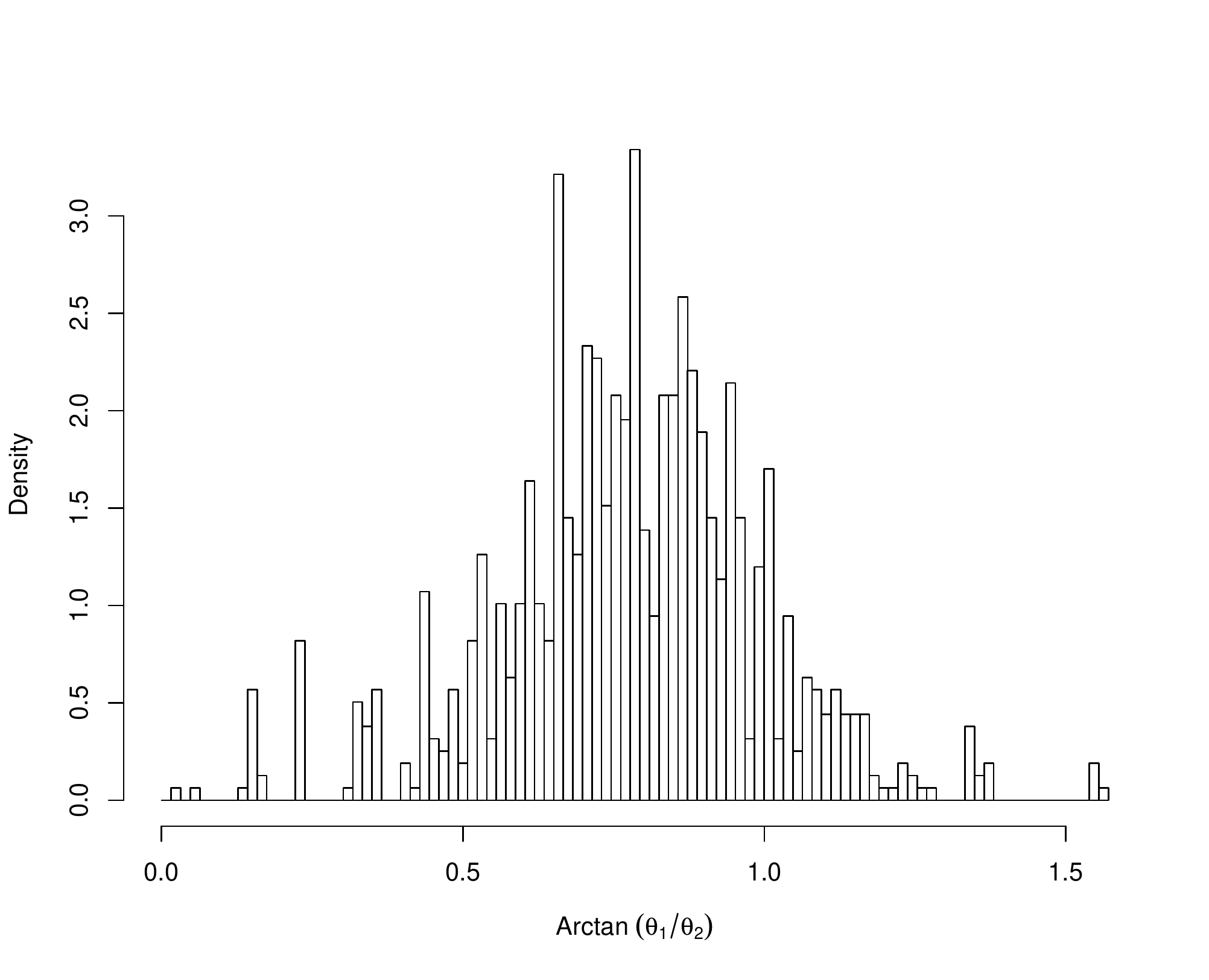}
\caption{\footnotesize  $\alpha_1=\alpha_2=2$}
\label{fig:c1same}
\end{minipage}
\end{figure} 

We first consider \ref{case1'} with $M_t\sim \mathcal N(0,1)$ and $\bb Q_t$ a bivariate standard gaussian vector with correlation $0.9$. This corresponds to the diagonal BEKK-ARCH modell. To simplify the graphical presentation, we consider the spectral measure for the absolute values of $\bb X_t$.  Figure \ref{fig:c1different} corresponds to different  coefficients $c_1=1$ and $c_2=(1/3)^{1/4}$ whereas  Figure \ref{fig:c1same} corresponds to equal coefficients $c_1=c_2=1$. In accordance with Theorem \ref{thm:asymptotic independence}, the angular measure in Figure \ref{fig:c1different} is concentrated around $0$ and $\pi/2$. Following Lemma \ref{lem:support spectral measure} (c) the support of the angular measure should be $[0,\pi/2)$ which is not in contradiction with Figure \ref{fig:c1same}.

\begin{figure}[h!]
\begin{minipage}[b]{.5\linewidth}
\centering \includegraphics[height=0.25\textheight]{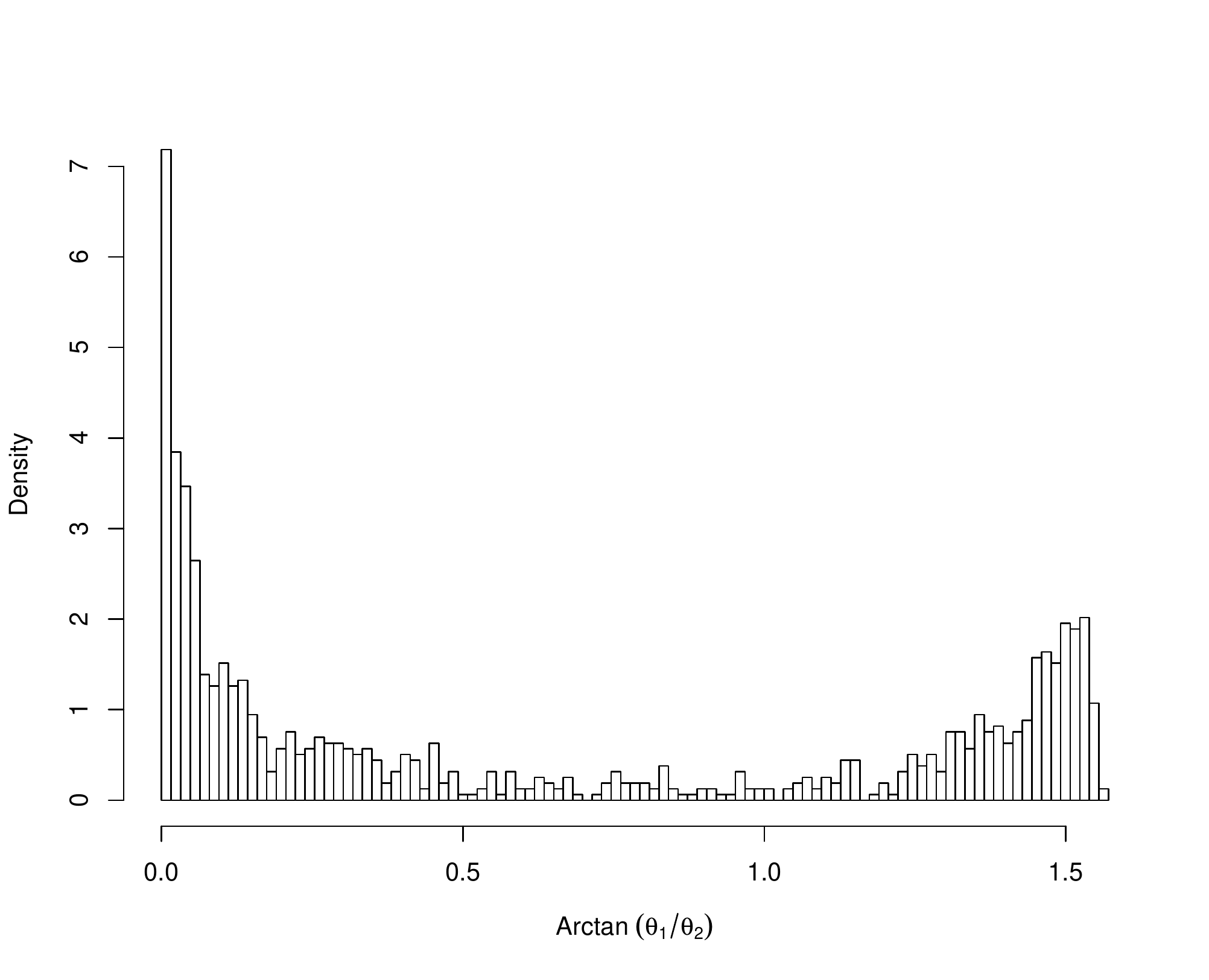}
\caption{\footnotesize  $\alpha_1=1$ and $\alpha_2=2$}
\label{fig:c2different}
\end{minipage} \hfill
\begin{minipage}[b]{.5\linewidth}
\centering    \includegraphics[height=0.25\textheight]{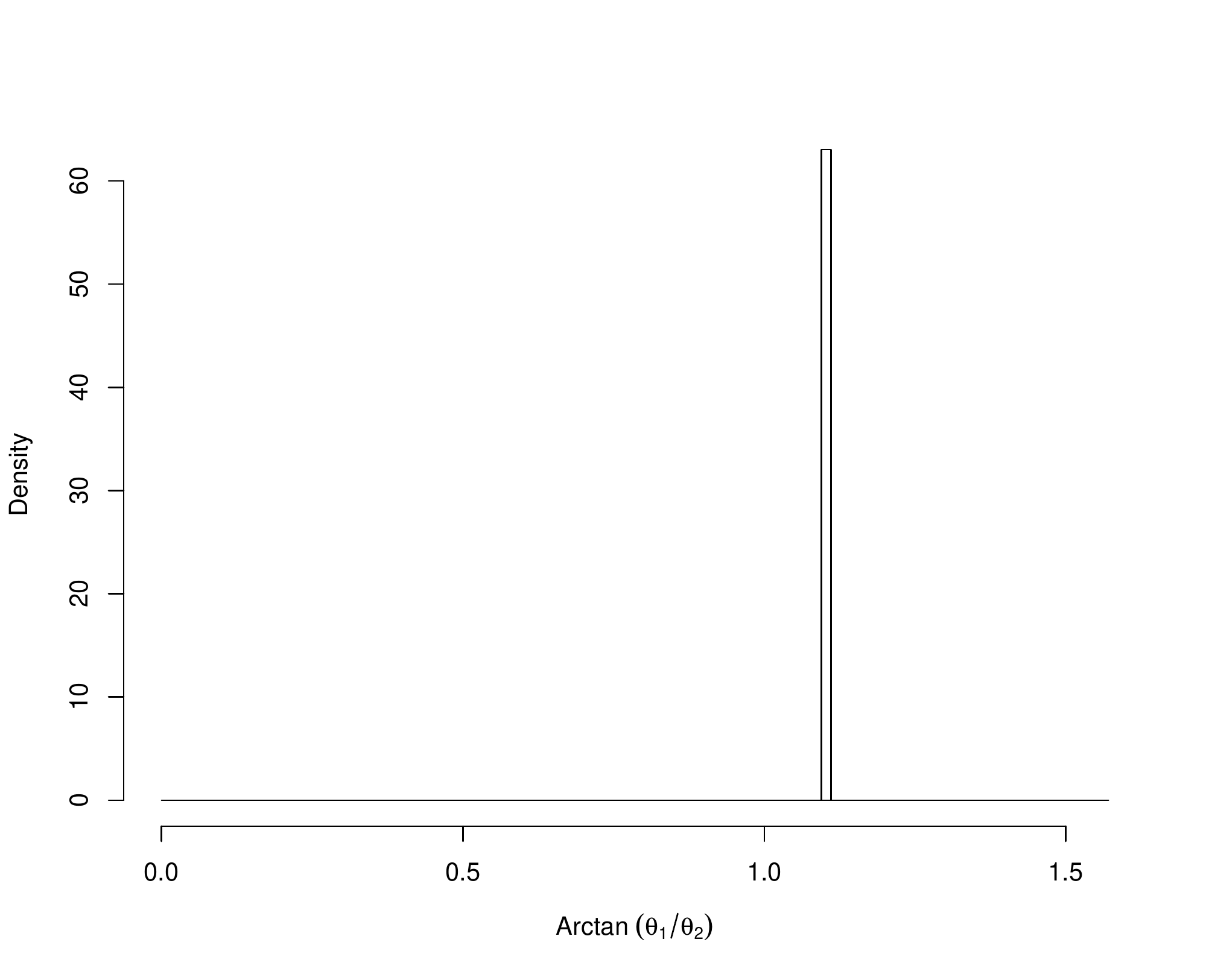}
\caption{\footnotesize  $\alpha_1=\alpha_2=1$}
\label{fig:c2same}
\end{minipage}
\end{figure}

Next we consider \ref{case2'} with $M_t=Z_t^2$, $Z_t\sim \mathcal N(0,1)$, $b_1=b_2=0.1$ and constant $\bb Q_t = (0.2,0.1)^\top$.
This corresponds to the totally correlated CCC-GARCH model. Again the angular measure is concentrated around $0$ and $\pi/2$ in Figure \ref{fig:c2different} for different  coefficients $c_1=0.9$ and $c_2$ the positive root of $3c_2^2+0.2c_2+0.01=1$. For equal coefficients $c_1=c_2=0.9$ in Figure  \eqref{fig:c2same}, the support of the angular measure of $\widetilde{ \bb \Theta}_0$ is concentrated around $\arctan(Q_{t,1}/Q_{t,2})=\arctan(2)$ as expected from Lemma \ref{lem:support spectral measure} (b).

\begin{figure}[h!]
\begin{minipage}[b]{.5\linewidth}
\centering \includegraphics[height=0.25\textheight]{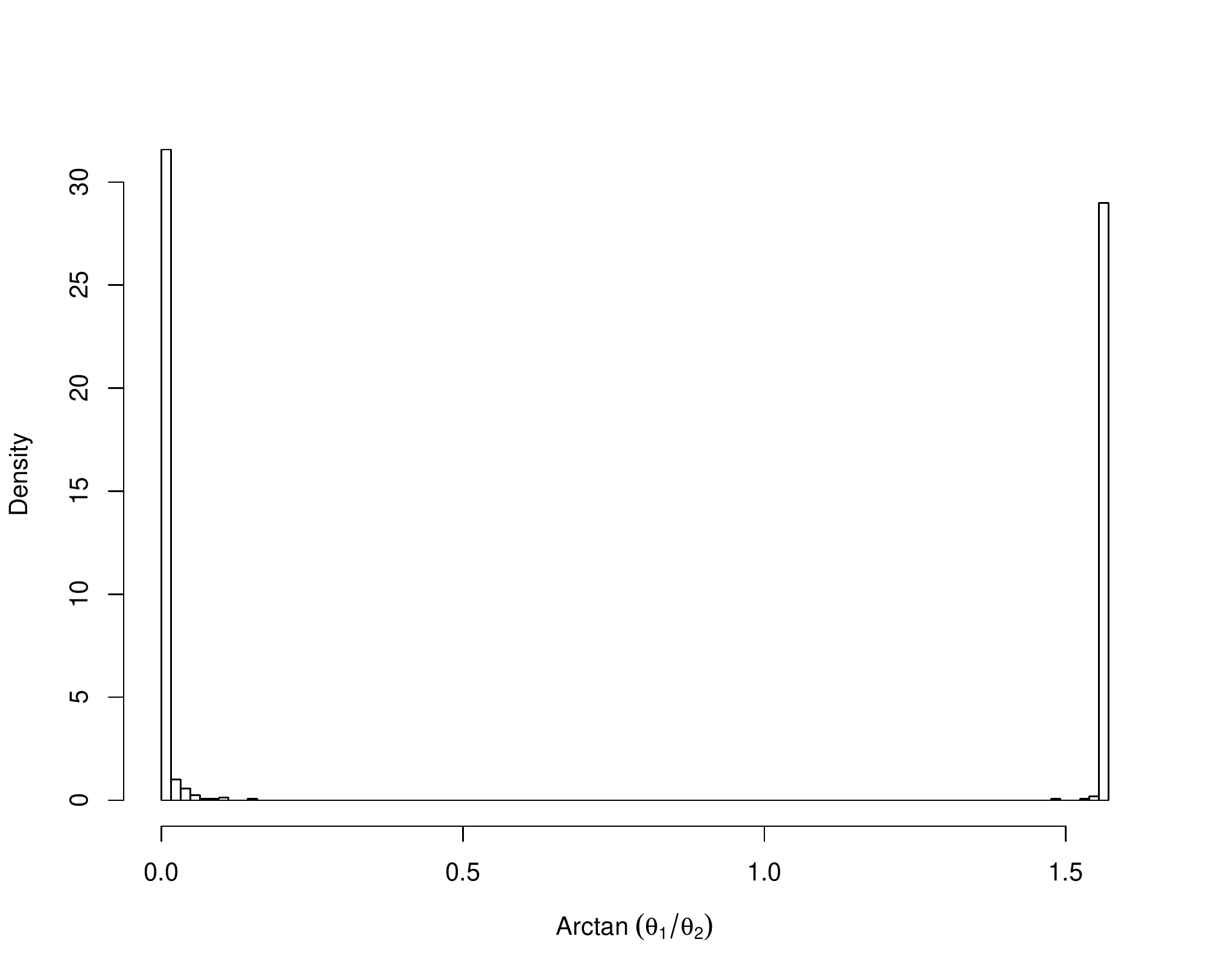}
\caption{\footnotesize  $\alpha_1=1$ and $\alpha_2=2$}
\label{fig:c3different}
\end{minipage} \hfill
\begin{minipage}[b]{.5\linewidth}
\centering    \includegraphics[height=0.25\textheight]{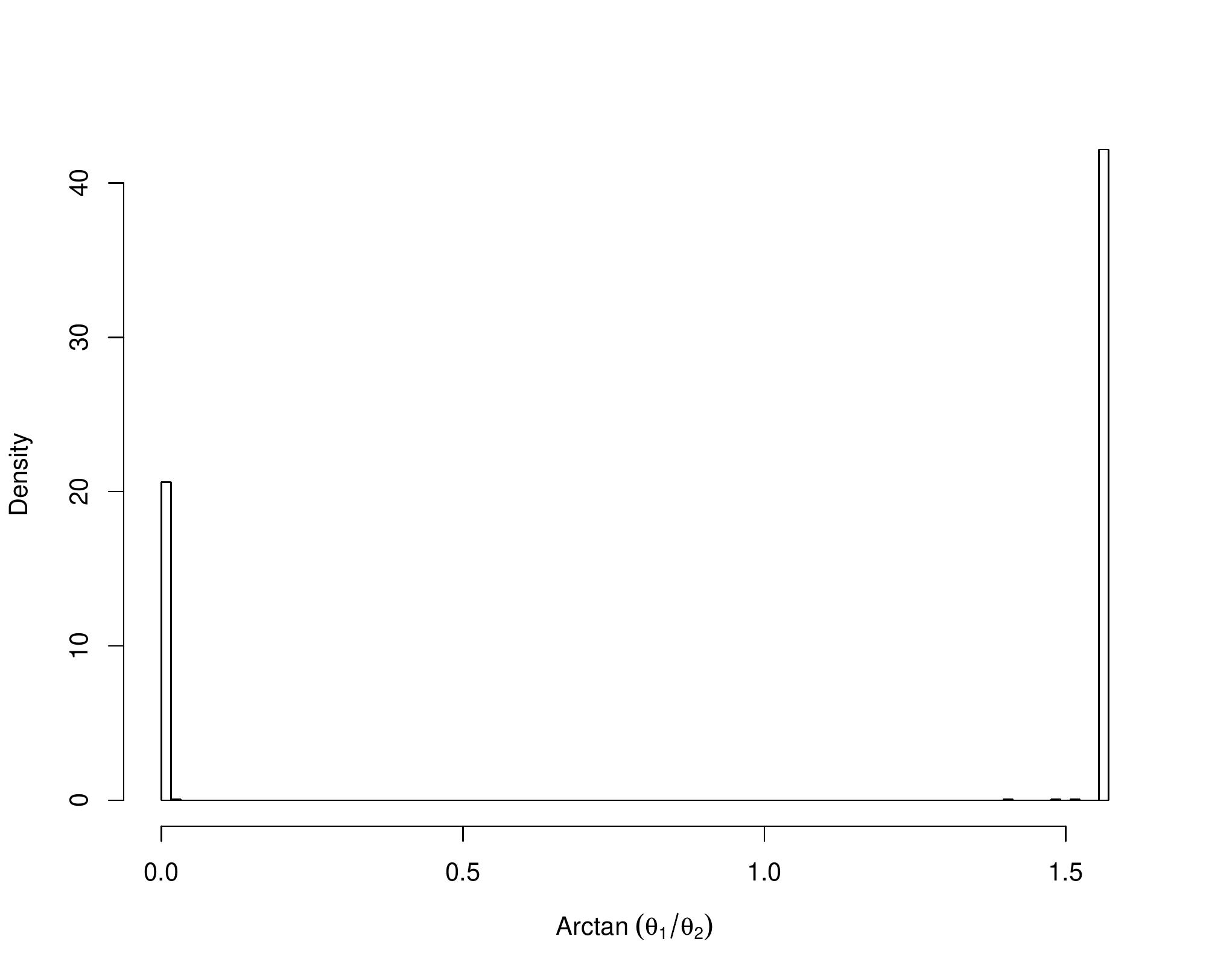}
\caption{\footnotesize  $\alpha_1=\alpha_2=1$}
\label{fig:c3same}
\end{minipage}
\end{figure} 

Figures \ref{fig:c2different}  and \ref{fig:c2same} correspond to the totally correlated CCC-GARCH. In order to investigate our conjecture we consider the case where the covariance matrix $\bb \Sigma$ is standardized with correlation $0.5$.  In Figures \ref{fig:c3different}  and \ref{fig:c3same} the parameters are the same as in Figures \ref{fig:c2different}  and \ref{fig:c2same}. Both angular measures are concentrated around $0$ and $\pi/2$, supporting our conjecture and encouraging us to extend it: The asymptotic independence might be the rule for any CCC-GARCH model except when the CCC-GARCH model is totally correlated with equal marginal tail index.

\appendix

\section*{Appendix}
\section{Proof of Lemma \ref{lem:eqcond}}
\setcounter{stepnumber}{0}

The fundamental ingredient in the proof is a large deviation result for $T_u$ by \cite{Buraczewski.etal:2016a} (see also \cite{buraczewski2018pointwise}). It gives a very precise bound on the typical range of $T_u$, which allows us to deduce properties of the relevant random variables at time $T_u$, by replacing the random time by a deterministic bound.

\medskip

\step Fix $\epsilon >0$ and write $C_u=C_u(\epsilon)$. It is enough to show
 that $\lim_{u\to \infty}\P(C_u^c \, | \, \Tu<\infty)=0$. Indeed, we can  sandwich the conditional probabilities as follows
\begin{align*}  
\lefteqn{\P \big(  X_2>u^{1/\alpha_2}  \, \big| \, \Tu < \infty \big)}\\
&\ge~ \P \big( \big\{X_2>u^{1/\alpha_2} \big\} \cap  C_u  \, \big| \, \Tu < \infty \big)\\
&=~ \P\big( X_2 > u^{1/\alpha_2} \, \big| \, \Tu < \infty\big)-\P\big(\big\{ X_2 > u^{1/\alpha_2} \big\} \cap  C_u^c\, \big| \, \Tu < \infty\big) \\
&\ge~  \P\big( X_2 > u^{1/\alpha_2} \, \big| \, \Tu < \infty\big) -\P\big(  C_u^c\, \big| \, \Tu < \infty\big)\,.
\end{align*}
Then the desired result follows by letting $u\to \infty$.
We will consider each of the contributions to $C_u^c$ separately:
\begin{align*} C_u^c&~=~ \Big\{ {\Tu} > L_u\Big\}\cup \Big\{ {X_{\Tu,1}} > u^{(1 + \epsilon)/\alpha_1} \Big\}  \cup\Big\{\max_{1\le k\le L_u}\dfrac{Q_{k,2}}{Q_{k,1}} >  u^{\varepsilon/\alpha_1}\Big\}\\&   \cup \Big\{ \sum_{l=1}^{\Tu}(W_{2,l}-W_{1,l})-T_u(\mu_{2|1}-\mu_{1|1})> \epsilon\Tu \Big\}   \cup \Big\{ \sum_{l=1}^{L_u} W_{l,1} > \frac{1+\epsilon}{\alpha_1} \log u\Big\}
\\
&~=~ A \cup B \cup D \cup E \cup F.\end{align*}
By \eqref{eq:kg}, the required assertion $\lim_{u \to \infty} P(B|\Tu<\infty)=0$ will as well follow from 
$$ \lim_{u \to \infty} u \cdot P(B \cap \{\Tu < \infty\}) \le \lim_{u \to \infty} u \cdot P(B)=0.$$

\step The negligibility of $A$ is a direct consequence of  \cite[Lemma 4.3]{Buraczewski.etal:2016a}) which provides that for a sufficiently large constant $C$, 
\begin{equation*}
 \lim_{u \to \infty} \P \bigg( \Big| {\Tu} - \frac{\log u}{\mu_1 \alpha_1} \Big| \ge C f(u)\\, \bigg| \, {\Tu} < \infty \bigg)=0\,,
\end{equation*}
where $f(u)= \sqrt{ \log(u) \cdot \log( \log(u)) }.$

\medskip

\step  Negligibility of $B$ and $F$: Considering $B$, we have by \eqref{eq:tailsXstar}  that
 $\lim_{u \to \infty} u \P(X_1 > u^{(1 + \epsilon)/\alpha_1})=0$ implying that
$$\lim_{u \to \infty} u \P(X_{\Tu,1} > u^{(1 + \epsilon)/\alpha_1})=0,$$ since $X_1 = \sup_{n} X_{n,1}$.

By the classical Cram\'er estimate for the random walk $W_{n,1}$ (see \cite[XII.(5.13)]{Feller:1971} it holds
$$ \lim_{u \to \infty} u^{1/\alpha_1} \P\bigg( \sup_{n \in \N} \Big(\sum_{l=1}^n W_{l,1}\Big) > \log u\bigg) = c^* \in (0,\infty)$$
and hence in particular
$$\lim_{u \to \infty} u  \P\bigg( \sum_{l=1}^{L_u} W_{l,1} > \frac{1+\epsilon}{\alpha_1}\log u\bigg)  =0.$$
%

\step Now we turn to $D$. 
A union bound yields
\begin{align*}
\P\Big(\max_{1\le k\le L_u}&\dfrac{Q_{k,2}}{Q_{k,1}} > u^{\varepsilon/\alpha_1}, \, \Tu < \infty \Big)
\le\sum_{k=1}^{L_u}\P\Big( u^{\varepsilon/\alpha_1} Q_{k,1} <  Q_{k,2}, \, \Tu < \infty\Big) \,.
\end{align*}
We decompose for any $k\ge 0$
\begin{align*}
\lefteqn{\P(u^{\varepsilon/\alpha_1} Q_{k,1} <  Q_{k,2},\, {\Tu}<\infty)}\\&=~ \P(u^{\varepsilon/\alpha_1} Q_{k,1} <  Q_{k,2},X_1>u^{1/\alpha_1})\\
&\le~ \P\Big(u^{\varepsilon/\alpha_1} Q_{k,1} <  Q_{k,2}, \\
&~~~ \sum_{j\neq k} \prod_{\ell=1}^{j-1} (b_1+c_1M_\ell) Q_{j,1} +\prod_{\ell=1}^{k-1} (b_1+c_1M_\ell)Q_{k,1}>u^{1/\alpha_1}\Big)\,.
\end{align*}
We bound this probability by the sum of two terms
\begin{align}
 \P\Big(u^{\varepsilon/\alpha_1} Q_{k,1} <  Q_{k,2},\ \sum_{j\neq k}\prod_{\ell=1}^{j-1} (b_1+c_1M_\ell) Q_{j,1}> \frac12{u^{1/\alpha_1}}\Big)  \nonumber \\
 + \P\Big(u^{\varepsilon/\alpha_1} Q_{k,1} <  Q_{k,2},\, \prod_{\ell=1}^{k-1} (b_1+c_1M_\ell)Q_{k,1}>\frac12 u^{1/\alpha_1}\Big) \label{eq:twotermbound}
 \end{align}
 and have to show that both contributions, when summed over $k=0, \dots, L_u$, are of order $o(u^{-1})$.

We estimate the second term in \eqref{eq:twotermbound} thanks to Markov's inequality of order $\alpha_1/(1+\varepsilon)<\kappa<\alpha_1$:
\begin{align*}
\P\Big(u^{\varepsilon/\alpha_1} Q_{k,1} <  Q_{k,2},\ &\prod_{\ell=1}^{k-1} (b_1+c_1M_\ell)Q_{k,1}> \frac12 u^{1/\alpha_1}\Big)\\
&\le \P\Big( \prod_{\ell=1}^{k-1} (b_1+c_1M_\ell) Q_{k,2}> \frac12 u^{(1+\varepsilon)/\alpha_1}\Big)\\
&\le \frac{2^\kappa \big(\E[(b_1+c_1M)^{\kappa}]\big)^k \, \E [\|\bb Q\|^\kappa]}{ u^{\kappa((1+\varepsilon)/\alpha_1)}}\,.
\end{align*}
As $\alpha_1/(1+\varepsilon\alpha_1)<\kappa<\alpha_1$ we have that $q:=\E[(b_1+c_1M)^{\kappa}]<1$ and conclude
\begin{align*}
&~\sum_{k=0}^\infty\P\Big(u^{\varepsilon/\alpha_1} Q_{k,1}< Q_{k,2},\prod_{\ell=1}^{k-1} (b_1+c_1M_\ell)Q_{k,1}>\frac12 u^{1/\alpha_1}\Big)
\\ \le&~\frac{2^\kappa \E [\| \bb Q \|^\kappa]}{1-q} \frac{1}{u^{\kappa((1+\varepsilon)/\alpha_1)}}~=~o(u^{-1})\,.
\end{align*}

\step Finally we turn to $E$. Note that $E = \emptyset$ in \ref{case1}, since then 
$$ W_{l,2}-W_{l,1} ~=~\log \bigg( \frac{c_2M_l}{c_1M_l} \bigg) ~=~ \log c_2 - \log c_1 ~=~ \mu_{2|1} - \mu_{1|1} \quad \text{ a.s.}$$ 
Hence, we consider only \ref{case2}. We have to prove the conditional large deviation result
$$
\lim_{u\to \infty}\P\Big(\underbrace{\sum_{k=1}^{\Tu}(W_{k,2}-W_{k,1}) -\Tu (\mu_{2|1}-\mu_{1|1})>\epsilon \Tu}_{=:D_u}\,\mid\, \Tu<\infty\Big)=0
$$

Under $\P^{\alpha_1}$ we have that $S_n=\sum_{k=1}^n W_{k,1}$ constitutes a random walk with positive drift $\mu_{1|1}=\E^{\alpha_1}[\log(b_1+c_1 M)]>0$. Thus under the change of measure $S_n\to \infty$, $\Tu<\infty$  a.s. Note here that $\Tu$ is not a stopping time for the random walk, but for the sequence $X_{1,n}$. However, divergence of $S_n$ implies divergence of $X_{1,n}$, see e.g. \cite[Theorem 2.1]{Goldie:Maller:2000}. Hence we have the identity
$$
\P\big(D_u\,\big|\, \Tu<\infty\big)=\dfrac{\E^{\alpha_1}\big[e^{-\alpha_1 S_{\Tu}}\, 1_{D_u}\big]}{\P(\Tu<\infty)}\,.
$$
Since $u\P(\Tu<\infty)\to c>0$ as $u\to \infty$, it is enough to show that 
$$
\lim_{u\to\infty}u\E^{\alpha_1}\big[e^{-\alpha_1 S_{\Tu}}\, 1_{D_u}\big]=0\,.
$$
We have 
\begin{align*}
u\E^{\alpha_1}\big[e^{-\alpha_1 S_{\Tu}}\,1_{D_u}\big]&=\E^{\alpha_1}\Big[\Big(\dfrac{u^{1/\alpha_1}}{X_{\Tu,1}}\Big)^{\alpha_1}\Big(\dfrac{X_{\Tu,1}}{e^{S_{\Tu}}}\Big)^{\alpha_1}\,1_{D_u}\Big]\\
&\le\E^{\alpha_1}\Big[\Big(\dfrac{X_{\Tu,1}}{e^{S_{\Tu}}}\Big)^{\alpha_1}\,1_{D_u}\Big]
\end{align*}
by definition of $\Tu$ implying that $X_{\Tu,1}>u^{1/\alpha_1}$. Then, using Chernoff's device, we achieve for any $\lambda>0$ the following upper-bound
\begin{align*}
\E^{\alpha_1}\Big[\Big(\dfrac{X_{\Tu,1}}{e^{S_{\Tu}}}\Big)^{\alpha_1}\,1_{D_u}\Big]\le \E^{\alpha_1}\Big[\Big(\dfrac{X_{\Tu,1}}{e^{S_{\Tu}}}\Big)^{\alpha_1}e^{\lambda (\sum_{k=1}^{\Tu}(W_{k,2}-W_{k,1}) -\Tu (\mu_{2|1}-\mu_{1|1})-\epsilon \Tu)}\Big]\,.
\end{align*}
We will show that 
$$
Z_n(\epsilon,\lambda):= \Big(\dfrac{X_{n,1}}{e^{S_n}}\Big)^{\alpha_1}\dfrac{e^{\lambda \sum_{k=1}^{n}(W_{k,2}-W_{k,1}) }}{\big(e^{\lambda((\mu_2-\mu_1)+\epsilon)}\big)^n}
$$
is an integrable stochastic process with $\sup_{n\ge 1}\E^{\alpha_1}[Z_n(\epsilon,\lambda)]<\infty$ a.s. for any $\epsilon>0$ small by choosing $\lambda>0$ accordingly. Then  $\E^{\alpha_1}[Z_n(\epsilon',\lambda)]\to 0$ for any $\epsilon'>\epsilon$ which is the desired result. \\

In order to show the uniform bound for $\E^{\alpha_1} [Z_n]$ one has to introduce the sequence
$$
V_n=\dfrac{X_{n,1}}{e^{S_n}}=\dfrac{\sum_{k=1}^{n}\prod_{l=1}^{k-1}A_lQ_{k,1}}{\prod_{l=1}^{n}A_l}=\dfrac1{A_n}V_{n-1}+\dfrac{Q_{n,1}}{A_n}
$$
with $A_n=b_1+c_1M_n$. We abbreviate $\Delta W_k:=W_{k,2}-W_{k,1}$ and $\Delta \mu := \mu_{2|1}-\mu_{1|1}$ and note that
$\E^{\alpha_1} (\Delta W_1) ~=~ \Delta \mu$ and that, due to the assumptions of \eqref{case2}
\begin{align*}
d_*~:=~\log(b_2)- \log(b_1) ~\le~ \Delta W_1 ~&=~ \log \Big( \frac{b_2+c_2M_1}{b_1+c_1M_1} \Big) \\
&\le~ \log(c_2)- \log(c_1) ~=:~ d^*.
\end{align*}
We will use the recursive formula
\begin{align*}
Z_n(\epsilon,\lambda)&=V_n^{\alpha_1}\dfrac{e^{\lambda \sum_{k=1}^{n} \Delta W_k }}{\big(e^{(\lambda \Delta \mu+\epsilon)}\big)^n}\\
&=\Big(\dfrac1{A_n}V_{n-1}+\dfrac{Q_{n,1}}{A_n}\Big)^{\alpha_1}\dfrac{e^{\lambda \sum_{k=1}^{n-1}\Delta W_k }}{\big(e^{\lambda \Delta \mu+\epsilon}\big)^{n-1}} \cdot \dfrac{e^{\lambda \Delta W_n}}{e^{\lambda(\Delta \mu+\epsilon)}}\,.
\end{align*}
Then 
\begin{align}
& \E^{\alpha_1}\big[Z_n(\epsilon,\lambda)\mid \mathcal F_{n-1}\big] \label{eq:boundZn}\\
\le~&\E^{\alpha_1}\Big[ \dfrac1{A_n^{\alpha_1}}\dfrac{e^{\lambda(\Delta W_n)}}{e^{\lambda(\Delta \mu+\epsilon)}}\Big]Z_{n-1}+\E^{\alpha_1}\Big[\Big(\dfrac{Q_{n,1}}{A_n}\Big)^{\alpha_1}\dfrac{e^{\lambda\Delta W_n}}{e^{\lambda(\Delta \mu+\epsilon)}}\Big]\dfrac{e^{\lambda \sum_{k=1}^{n-1}\Delta W_k }}{(e^{\lambda \Delta \mu+\epsilon})^{n-1}} \nonumber
\end{align}
and we are going to prove that both the factors 
$$ \mathfrak{m}(\lambda) ~:=~ \E^{\alpha_1}\Big[ \dfrac1{A_n^{\alpha_1}}\dfrac{e^{\lambda \Delta W_n}}{e^{\lambda(\Delta \mu+\epsilon)}}\Big] ~=~ \E \Big[ \dfrac{e^{\lambda\Delta W_1}}{e^{\lambda(\Delta \mu+\epsilon)}}\Big] $$
$$ \mathfrak{c}(\lambda) ~:=~ \E^{\alpha_1}\Big[ \dfrac{e^{\lambda\Delta W_1}}{e^{\lambda( \Delta \mu+\epsilon)}}\Big]$$
are less than one for suitably small $\lambda$. Upon taking $\P^{\alpha_1}$-expectations in \eqref{eq:boundZn}, we infer
$$ \E^{\alpha_1}\big[Z_n(\epsilon,\lambda)\big] ~\le~ \mathfrak{m}(\lambda) \E^{\alpha_1}\big[Z_{n-1}(\epsilon,\lambda)\big] + \mathfrak{c}(\lambda)^{n-1} C_Q$$
with
$$ C_Q = \E^{\alpha_1}\Big[\Big(\dfrac{Q_{n,1}}{A_n}\Big)^{\alpha_1}\dfrac{e^{\lambda\Delta W_n}}{e^{\lambda( \Delta \mu+\epsilon)}}\Big] = \E\Big[Q_{n,1}^{\alpha_1} \dfrac{e^{\lambda\Delta W_n}}{e^{\lambda( \Delta \mu+\epsilon)}}\Big] \le \E\big[Q_{n,1}^{\alpha_1} \big]\dfrac{e^{\lambda d^*}}{e^{\lambda( \Delta \mu+\epsilon)}}<\infty.$$

\medskip

Using again the boundedness of $\Delta W_1$, an application of Hoeffding's lemma yields that
$$\E^{\alpha_1}\big[ {e^{\lambda \Delta W_1}}\big] ~\le~ \exp \Big(\lambda (\Delta \mu) + \frac{\lambda^2}{8}(d^*-d_*) \Big)$$
and hence
\begin{align*}
\mathfrak{c}(\lambda) ~\le~ \exp \Big( \frac{\lambda^2}{8}(d^*-d_*) - \lambda \epsilon \Big).
\end{align*}
Thus $\mathfrak{c}(\lambda)<1$ for suitably small $\lambda>0$.

\medskip

Turning to $\mathfrak{m}(\lambda)$, we again use Hoeffding's lemma to get that
$$ \mathfrak{c}(\lambda) ~\le~ \exp \Big( \lambda \E[\Delta W_1] + \frac{\lambda^2}{8} (d^*-d_*) - \lambda \E^{\alpha_1} [\Delta W_1] - \lambda \epsilon \Big)$$
Hence, it suffices to show that difference of the expectations is nonpositive.
We write
$$ \Delta W_1 ~=~ \log\Big( \frac{b_2 +c_2 M_1}{b_1 + c_1 M_1} \Big) ~=~ \log f(A_1)$$
for $A_1=b_1 +c_1 M_1$ and
$$ f(a) ~=~ \frac{b_2 + c_2 \big( \tfrac{a-b_1}{c_1}  \big)}{a} ~=~- \frac{b_1}{a} \Big( \frac{c_2}{c_1}-\frac{b_2}{b_1} \Big) + \frac{c_2}{c_1}, \qquad a \ge b_1.$$
The function $f$ is increasing and $\log f(a) \ge 0$ for $a \ge b_1$.
Then
\begin{align*}
 &~\E[\Delta W_1] -\E^{\alpha_1} [\Delta W_1] ~=~ \E \Big[ \log f(A_1) \big(1-A_1^{\alpha_1}\big) \Big]  \\
 ~=&~ \E \Big[ \log f(A_1) \big(1-A_1^{\alpha_1}\big)\mathbf{1}_{\{A \le 1\}}  \Big] + \E \Big[ \log f(A_1) \big(1-A_1^{\alpha_1}\big)\mathbf{1}_{\{A >1\}}  \Big] 
\end{align*}
Recall that $\E[A_1^{\alpha_1}]=1$, thus
\begin{align*} -\E \big[(1-A_1^{\alpha_1}) \mathbf{1}_{\{A >1\}}\big] ~&=~ \E \big[(1-A_1^{\alpha_1})\mathbf{1}_{\{A \le1\}}\big] \\
 -\E \big[\log f(A_1)(1-A_1^{\alpha_1})\mathbf{1}_{\{A >1\}}  \big] ~&\ge ~ \E \big[\log f(A_1)(1-A_1^{\alpha_1})\mathbf{1}_{\{A \le 1\}} \big] \end{align*}
where we have used that the function $f$ is increasing and nonnegative.

\end{document}